\chardef\bslash=`\\ 
\newtheorem{thm}{Theorem}[section]
\newtheorem{lem}[thm]{Lemma}
\newtheorem{conjecture}{Conjecture}
\theoremstyle{definition}
\newtheorem{rem}{Remark}[section]
\newtheorem*{ex}{Example}
\theoremstyle{remark}
\newcommand{\thmref}[1]{Theorem~\ref{#1}}
\newcommand{\secref}[1]{Section~\ref{#1}}
\newcommand{\remref}[1]{Remark~\ref{#1}}
\newcommand{\figref}[1]{Figure~\ref{#1}}
\definecolor{gray}{rgb}{0.5,0.5,0.5}
\newcommand{\obsgray}[1]{\textcolor{gray}{#1}}
\newcommand{\toinf}[1]{#1\rightarrow \infty}
\newcommand{\divides}{\mid}
\newcommand{\e}{\mathrm{e}}
\DeclareMathOperator{\li}{li}
\def\imod#1{\allowbreak\mkern10mu({\operator@font mod}\,\,#1)}
\newcommand{\eval}[2][\right]{\relax
  \ifx#1\right\relax \left.\fi#2#1\rvert}
\title[The origin of $\textrm{li}(x)$ in the PNT]{The origin of the logarithmic integral in the prime number theorem}
\author{Kolbj{\o}rn Tunstr{\o}m}
\address{Dept. of Ecology and Evolutionary Biology\\ 
Princeton University\\
NJ, 08540, USA.}
\email{ktunstrom@gmail.com}
\begin{document}

%

\begin{abstract} 

We establish why $\li(x)$ outperforms $x/\log x$ as an estimate for the prime counting function $\pi(x)$. The result follows from subdividing the natural numbers into the intervals \mbox{{$s_{k} :=\{p_k^2, \dots,p_{k+1}^2-1\}$}}, $k\geq1$, each being
fully sieved by the $k$ first primes $\{p_1, \dots,p_{k}\}$. Denoting the number of primes in $s_k$ by $\pi_k$, we show that $\pi_k \sim |s_k| / {\log p_{k+1}^2}$ and that $\pi(x) \sim \li(x)$ originates as a continuum approximation of the sum $\sum_k \pi_k$. In contrast, $\pi(x) \sim x / \log x$ stems from sieving
repeatedly in regions already completed 
---explaining why $x / \log x$ underestimates $\pi(x)$. 
The explanatory potential arising from defining $s_k$ appears promising, 
evidenced in the last section where we outline further research. 
\end{abstract}



 \maketitle




\section{Introduction}
\label{sec:intro}
The prime number theorem states that as $x \rightarrow \infty$, the number of primes below $x$, denoted $\pi(x)$, can be approximated by either relations
\begin{align}
	\pi(x) \sim \frac{x}{\log x} \quad \textrm{or} \quad \pi(x) \sim \li(x), 
\label{eq:pnt}	
\end{align}
where $\li(x)$ is the logarithmic integral defined by $\li(x):= \int_2^x \frac{dt}{\log t}$. While the two estimates in \eqref{eq:pnt} are equivalent---easily proved by a series expansion of $\li(x)$, where $x/\log x$ appears as the leading term---they differ in performance. As is well known, established by proof already in 1899 by de la Vall\'{e}e-Poussin \cite{Poussin:1899}, $\li(x)$ is a superior guess of the number of primes up to $x$ compared to $x/\log x$. Nonetheless, the exact reason \textit{why} $\li(x)$ outcompetes $x/\log x$ as an estimator for $\pi(x)$ has remained unresolved. The absence of an explanation is evident in various literature surveys \cite{DIAMOND:ug, Fine:2010vu,Goldfeld:2004wh, Goldston:2007wk}, and perhaps most clearly stated by Goldston \cite{Goldston:2007wk}: \textit{'The extraordinarily good fit between $\pi(x)$ and $\li(x)$, far better than the first approximation $x /\log x$, has been the subject of intensive but largely unsuccessful investigation for the last one hundred years.'}

In this paper,  we introduce a slight shift of perspective on the sieve of Eratosthenes. Rather than focusing directly on the number of primes below a given number $x$, we examine what the effect is of sieving by the $k$th prime $p_k$. This eventually inspires a subdivision of the natural numbers in terms of the specific intervals $s_k:=\{p_{k}^2, \dots p_{k+1}^2-1\}$, where each $s_k$ has the essential property that it is sieved entirely by the $k$ first primes. By writing the number of primes in $s_k$ as $\pi_k$, we have from the prime number theorem that $\pi_k \sim |s_k| / {\log p_{k+1}^2}$. Summing these estimates up to the desired value of $x$ gives us $\pi(x)$, and it is easy to see that $\li(x)$ arises as a continuum approximation of this sum. $x/\log x$, on the other hand, derives from sieving the whole interval below $x$ by every prime $p\leq\sqrt{x}$, effectively overcounting the number of composites and underestimating ditto primes. As a consequence of these insights, we are able to explain the relation between $x/\log x$ and $\li(x)$ in the prime number theorem.

The remaining parts of the paper consist of three sections. In the first, we expand in a slower pace the content of the previous paragraph, motivating the subdivision of the natural numbers from the sieve of Eratosthenes, and stating the definition of the intervals $s_k$. From the definition we establish probabilistic estimates of the number of primes within each interval, and in turn, this naturally steers us towards what we could call a probabilistic prime number theorem. The insight we acquire from the probabilistic perspective motivates an alternative formulation of the prime number theorem---where the logarithmic integral now originates as a continuum approximation of the sum taken over the individual interval estimates. The second section contains a more compact overview of results and detailed proofs. 
In the third and last section we explore some of the implications following in the wake of defining $s_k$.  Most notably, we discuss how our results relate to the studies of primes in short intervals and the error term in the prime number theorem.

\section{Deriving $\li(x)$ from a subdivision of the natural numbers}
\label{sec:subdivide}

In 1849, the German astronomer Encke wrote a letter to his former academic advisor Gauss, in which he shared his thoughts on the frequency of primes. In his reply,  Gauss explained that his first inquiries into the topic dated as far back as 1792 or 1793, when he had received the Lambert supplements to the logarithmic tables, containing lists of all primes up to 1 million---and also that he had kept on adding to that million: Gauss wrote to Encke that he 'frequently spent an idle quarter of an hour to count a chiliad\footnote{Gauss counted the primes within \textit{chiliads}; intervals of 1000 consecutive integers.} here and there', which allowed him to calculate the average number of primes over short intervals, eventually conjecturing that\footnote{The historical account is found in \cite[p. 174]{Havil:2003}}
\begin{align*}
	\pi(x) \sim \li(x).	
\end{align*}

As we will see, Gauss's approach of probing the distribution of primes across short intervals is key to understanding the emergence of the logarithmic integral. Essentially we will apply the same strategy, with the critical distinction that our variant of Gauss's chiliads will be a set of precisely stated intervals that subdivides the natural numbers. Their definition is best motivated by a recapitulation of the sieve of Eratosthenes, which is a straightforward sieving procedure for finding all primes up to a given number $x$ and a common starting point for understanding the distribution of primes.

Fairly typical of what is found in textbooks or other literature---see e.g. \cite{Greaves:2001, Goldston:2007wk, Granville:2010cq}---we can describe the sieve of Eratosthenes as follows: First we remove all multiples of the first prime $p_1=2$ up to $x$. The next remaining number is the second prime $p_2=3$, so now we remove all its surviving multiples up to $x$. Thereafter, we repeat the process until no more composites can be removed by further sieving. This happens when we have sieved by the primes $p\leq \sqrt{x}$, as any composite with prime factors all larger than $\sqrt{x}$ must necessarily exceed $x$. What's more, the sieve of Eratosthenes lets us construct a probabilistic estimate of the number of primes up to $x$ by the following reasoning: The probability that a randomly picked number less than or equal to x is prime equals the probability that the said number is not divisible by any of the primes $p\leq \sqrt{x}$. Treating these events as independent, we arrive at the probability given by the Euler product 
\begin{align*}
	\prod_{p\leq \sqrt{x}} \left(1- \frac{1}{p}\right),
\end{align*}
and the expected number of primes up to $x$ is therefore 
\begin{align}
	\tilde \pi(x) = x \cdot 	\prod_{p\leq \sqrt{x}} \left(1- \frac{1}{p}\right), 
\label{eq:sieve_estimate}
\end{align}
where $\tilde \pi(x)$ is introduced as a probabilistic prime counting function. 

What now if we wanted to guess the number of primes up to some $y>x$, instead of $x$? Applying the same arguments, we could produce an estimate simply by replacing $x$ by $y$ in \eqref{eq:sieve_estimate}. However, by doing so, we would not factor in that we already have sieved the interval up to $x$. Consequently, the estimate in \eqref{eq:sieve_estimate} fails to take into account that every sieve step leaves behind it a completely sieved interval. 


To remedy this shortcoming, let us dim the light on the upper limit $x$ and shift focus towards what happens during one step of the sieve of Eratosthenes. For this purpose, let $\mathcal P_k:=\{p_1,\dots, p_k\}$ be the set of the $k$ first primes, and assume we have already sieved the natural numbers by all primes up to and including $p_{k-1}$. Necessarily, $p_{k}^2$ is the next composite that is not sieved, and therefore, when proceeding with step $k$, the sieving has no effect on numbers smaller than $p_{k}^2$---we are only removing elements from $p_{k}^2$ and upwards, and thereby locating all primes between $p_k^2$ and $p_{k+1}^2$. Evidently, for any $k\geq 1$, the interval \mbox{$\{p_k^2, \dots, p_{k+1}^2-1\}$} can be sieved apart from any other interval by the $k$ first primes. That being the case, what transpires as our version of Gauss's chiliads is the set of intervals defined by 
\begin{align*}
	\mbox{$s_k:=\{p_k^2, \dots, p_{k+1}^2-1\}$}. 
\end{align*}
To summarize this insight:

\begin{rem}
\label{remark1}
A logical consequence of the sieve of Eratosthenes is that the natural numbers can be split into the set of intervals $s_k$, $k\geq1$, where $s_k$ has the vital property that any one of  its elements is either divisible by some $p \in \mathcal P_k$ or else is a prime $p \notin \mathcal P_k$. Therefore, each $s_k$ can be sieved independently by the $k$ first primes.
\end{rem}

Now, let us return to the probabilistic prime counting function. How will our knowledge of $s_k$ and the fact that it can be sieved by the $k$ first primes affect $\tilde \pi(x)$? If we let the length of $s_k$ be given by $l_k := |s_k| = p_{k+1}^2-p_{k}^2$, and introduce $\tilde \pi_k$ to be the expected number of primes in $s_k$, it follows from the arguments leading to \eqref{eq:sieve_estimate} that
\begin{align}
	\tilde \pi_k = l_k \cdot \prod_{p \in \mathcal P_k} \left(1- \frac{1}{p}\right).
\label{eq:tildepi_k1}
\end{align}
If we further assume $k$ to be the integer such that $p_k^2 \leq x < p_{k+1}^2$, then it follows that our improved estimate of $\tilde \pi(x)$---now incorporating that each interval $s_k$ is fully sieved by the $k$ first primes---is given by
\begin{align}
	\tilde \pi(x) =  \sum_{j=1}^{k-1} \tilde \pi_j
			+ \frac{x-p_k^2}{l_k} \, \tilde \pi_k. 
\label{eq:sum_sieve_estimate}
\end{align}
	
A natural next step is to apply Merten's product theorem to \eqref{eq:sieve_estimate} and \eqref{eq:sum_sieve_estimate},
which leads to the probabilistic estimates
\begin{align}
	\tilde \pi(x) \sim  2 \textrm{e}^{-\gamma} \frac{x}{\log x} \quad \textrm{and} \quad \tilde \pi(x) \sim  2 \textrm{e}^{-\gamma} \li (x).
\label{eq:MPNT0}
\end{align}
The left estimate here corresponds to \eqref{eq:sieve_estimate} and is previously stated in the literature, see e.g. \cite{Granville:1995cm}. The right---and more precise estimate---corresponds to \eqref{eq:sum_sieve_estimate}, and is to the best of our knowledge an original result. The intermediate details necessary to arrive at this estimate are found in the proof of \thmref{thm:MPNT2}. Most importantly, we have arrived at two probabilistic estimates that each have their counterpart in the prime number theorem \eqref{eq:pnt}; except for a constant factor $2 \textrm{e}^{-\gamma}$, \eqref{eq:MPNT0} is identical to \eqref{eq:pnt}. 

As we explain in more detail in \secref{sec:notation}, the probabilistic estimates in \eqref{eq:MPNT0} can be understood as arising from finite sample spaces of possible prime positions within the specific intervals $s_k$. The positions of the actual primes naturally lie in these sample spaces and therefore the arguments we have laid out above also apply to the primes. In conclusion then, the reason why $\li (x)$ is a better estimate for $\pi(x)$ than $x/\log x$ is that it takes into account that each sieve step $k$ completes the sieving of the interval $s_k$, while $x/ \log x$ does not. Accordingly, this suggests that $\li (x)$ is the appropriate function to use for estimating the number of primes up to $x$.

Rather than \eqref{eq:MPNT0}, it appears natural now to write
\begin{align}
	\tilde \pi_k \sim  2 \textrm{e}^{-\gamma} \frac{l_k}{\log p_{k+1}^2} \quad \textrm{and} \quad \tilde \pi(x) \sim  2 \textrm{e}^{-\gamma} \li (x),
\label{eq:MPNT00}
\end{align}
where the left term---derived from \eqref{eq:tildepi_k1} via Merten's theorem (see \thmref{thm:MPNT1})---provides the expected number of primes within the intervals $s_k$, and the right term---obtained as a continuous approximation of $\sum_k  \tilde \pi_k$ (see \thmref{thm:MPNT2})---describes the expected number of primes up to $x$. Furthermore, the expression for $\tilde \pi_k$ in \eqref{eq:MPNT00} reflects the underlying sieving process, in that sieving by the $k$ first primes removes all composites up to $p_{k+1}^2$.

Following through to completion, \eqref{eq:MPNT00} points towards the alternative formulation of the prime number theorem given by
\begin{align}
	\pi_k \sim  \frac{l_k}{\log p_{k+1}^2} \quad \textrm{and} \quad  \pi(x) \sim \li (x),
\label{eq:PNT0}
\end{align}
where $\pi_k$ is the number of primes in $s_k$ (see \thmref{thm:PNT1}), and the natural interpretation of the logarithmic integral is that it originates from a continuous approximation of $\sum_k \pi_k$  (see \thmref{thm:PNT2}).

\section{Results and proofs}
Let $ \mathcal P_k:=\{p_1, \dots, p_k\}$ be the set consisting of the $k$ first primes. Also, let the interval $s_k$ be defined by \mbox{$s_k:=\{p_k^2, \dots, p_{k+1}^2-1\}$},
and denote the length of the interval $s_k$ by $l_k := |s_k| = p_{k+1}^2 - p_{k}^2$. Then we have the following results:

\begin{lem} 
\label{lem:lem1}
Let $\tilde \pi_k$ denote the expected number of primes in the interval $s_k$. Then 
\begin{align}
	\tilde \pi_k = l_k \cdot \prod_{p \in \mathcal P_k} \left(1- \frac{1}{p}\right).
\label{eq:tildepi_k2}
\end{align}
\end{lem}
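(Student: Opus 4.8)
The plan is to treat $\tilde\pi_k$ as the expectation of the number of survivors when the interval $s_k$ is passed through the same probabilistic sieve that underlies \eqref{eq:sieve_estimate}, and then to evaluate that expectation by linearity. First I would fix the finite probability space: draw an integer $n$ uniformly at random from the $l_k$ elements of $s_k$, and model the event $\{p \mid n\}$, for each $p \in \mathcal P_k$, as occurring with probability $1/p$, with these events taken to be mutually independent across the primes of $\mathcal P_k$. This is exactly the independence heuristic used to pass from the Euler product to \eqref{eq:sieve_estimate}; the only change is that the sieving primes are now precisely $\mathcal P_k$ rather than all $p \le \sqrt{x}$.

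The second step is to identify ``prime'' with ``sieve survivor'' on $s_k$, which is where Remark~\ref{remark1} does the essential work. For $n \in s_k$ we have $p_k^2 \le n < p_{k+1}^2$, so $p_k \le \sqrt{n} < p_{k+1}$; hence any composite $n$ in $s_k$ has a prime factor not exceeding $\sqrt{n}$, and that factor must lie in $\mathcal P_k$. Consequently an element of $s_k$ is prime if and only if it is divisible by no member of $\mathcal P_k$, so the probability that a uniformly chosen $n \in s_k$ is prime equals the probability that it survives sieving by $\mathcal P_k$. Under the independence model this survival probability is the product
\begin{align*}
	\prod_{p \in \mathcal P_k} \left(1 - \frac{1}{p}\right).
\end{align*}

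Finally I would assemble the expected count by linearity of expectation: writing $\tilde\pi_k = \sum_{n \in s_k} \Pr(n \text{ is prime})$ and inserting the common survival probability for each of the $l_k$ summands yields $\tilde\pi_k = l_k \prod_{p \in \mathcal P_k}(1 - 1/p)$, which is \eqref{eq:tildepi_k2}. The step I expect to require the most care is not the computation but the framing: the equalities $\Pr(p \mid n) = 1/p$ and the mutual independence of the divisibility events are not literally true on a finite interval, so the statement must be read as the exact value of a defined idealized expectation under the probabilistic sieve, rather than as an assertion about the genuine prime count in $s_k$. Making precise that $\tilde\pi_k$ denotes this model expectation---and that Remark~\ref{remark1} is what legitimizes sieving with the first $k$ primes alone---is the crux; once that is pinned down, the product and the linearity argument are immediate.
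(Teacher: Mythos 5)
Your proposal is correct and follows essentially the same route as the paper's own proof: identify primality in $s_k$ with non-divisibility by any $p \in \mathcal P_k$ (the content of Remark~\ref{remark1}), invoke the independence assumption to get the Euler product $\prod_{p \in \mathcal P_k}(1 - 1/p)$ as the survival probability, and multiply by $l_k$. Your added care in framing $\tilde\pi_k$ as an idealized model expectation (rather than a statement about the true prime count) is a sharper articulation of what the paper leaves implicit, but it is the same argument.
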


\begin{proof} 
Each number in $s_k$  is either divisible by some $p \in \mathcal P_k$ or else is a prime $p \notin \mathcal P_k$. The probability of a number in $s_k$ being prime is therefore equivalent to the probability of the number not being divisible by any $p\in \mathcal P_k$. Assuming these events to be independent, the desired probability is given by the Euler product  
\begin{align*}
	\prod_{p \in \mathcal P_k} \left(1- \frac{1}{p}\right),
\end{align*}
and the expected number of primes in $s_k$ is found by multiplying with the length $l_k$ of the sequence, resulting in \eqref{eq:tildepi_k2}.
\end{proof}

\begin{lem} 
\label{lem:lem2} 
Let $\tilde \pi(x)$ denote the expected number of primes less than or equal to $x$ and let $k$ be the integer such that $p_k^2 \leq x < p_{k+1}^2$. Then
\begin{align}
	\tilde \pi(x) =  \sum_{j=1}^{k-1} \tilde \pi_j
			+ \frac{x-p_k^2}{l_k} \, \tilde \pi_k. 
\label{eq:tildepi_x}
\end{align}
\end{lem}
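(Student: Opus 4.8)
The plan is to build the identity \eqref{eq:tildepi_x} directly from the partition of the natural numbers into the intervals $s_k$ recorded in \remref{remark1}, using linearity of expectation. Since $\tilde \pi(x)$ is the expected number of primes among the integers up to $x$, and since (above $p_1^2$) these integers split into the completed intervals $s_1, \dots, s_{k-1}$ together with the truncated leading segment $\{p_k^2, \dots, x\}$ of $s_k$, the expected count decomposes additively across these pieces. The whole argument is then a matter of evaluating each piece.

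For the completed intervals there is nothing to do beyond invoking \lemref{lem:lem1}: each $s_j$ contributes exactly $\tilde \pi_j$, so together they contribute $\sum_{j=1}^{k-1} \tilde \pi_j$, the first term of \eqref{eq:tildepi_x}. The substance of the proof lies in the truncated segment $\{p_k^2, \dots, x\}$ of $s_k$. Here the crucial point is that every element of $s_k$ is sieved by the \emph{same} set of primes $\mathcal P_k$, so the reasoning of \lemref{lem:lem1} assigns each integer in the segment the identical probability $\prod_{p \in \mathcal P_k}(1 - 1/p)$ of being prime, irrespective of where it sits in $s_k$. Hence the expected number of primes in the segment is just its length times this common probability. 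Writing that length as the fraction $(x - p_k^2)/l_k$ of the full length $l_k$ and using $\tilde \pi_k = l_k \prod_{p \in \mathcal P_k}(1 - 1/p)$ from \eqref{eq:tildepi_k2} collapses this contribution to $\frac{x - p_k^2}{l_k}\, \tilde \pi_k$, the second term. Summing the two contributions gives \eqref{eq:tildepi_x}.

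I expect the only delicate step to be the constancy of the per-integer probability throughout $s_k$: it is precisely this uniformity that allows the prime count over a sub-segment to be replaced by the linear scaling factor $(x - p_k^2)/l_k$, and it rests entirely on the defining property of $s_k$---that it is sieved as a single block by $\mathcal P_k$---established in \remref{remark1}. I would also flag, without dwelling on it, the minor boundary bookkeeping: the few integers below $p_1^2 = 4$ and an off-by-one in counting the endpoints of the truncated segment. In the continuum reading of $(x - p_k^2)/l_k$ as a fraction of interval length these corrections are immaterial, and they do not affect the stated identity.
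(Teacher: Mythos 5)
Your proposal is correct and follows essentially the same route as the paper: the paper also splits $\tilde\pi(x)$ into the contribution of the completed intervals $s_1,\dots,s_{k-1}$ (written there as a telescoping sum $\tilde\pi(p_k^2)=\sum_{j=1}^{k-1}[\tilde\pi(p_{j+1}^2)-\tilde\pi(p_j^2)]$) plus the partial segment $[p_k^2,x]$, and evaluates the latter exactly as you do, using the uniform per-integer probability $\prod_{p\in\mathcal P_k}(1-1/p)$ across $s_k$ to get $(x-p_k^2)\prod_{p\in\mathcal P_k}(1-1/p)=\frac{x-p_k^2}{l_k}\,\tilde\pi_k$. The only cosmetic difference is that you phrase the decomposition as a partition plus linearity of expectation while the paper phrases it as differences of the counting function; the substance, including the key uniformity observation for the truncated segment, is identical.
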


\begin{proof} 
The result follows by splitting $\tilde \pi(x)$ into a sum of two parts,
\begin{align}
	\tilde \pi(x) = \tilde \pi(p_{k}^2) +  [\tilde \pi(x) - \tilde \pi(p_{k}^2)]. 
\label{eq:tildepi_split}	
\end{align}
The first term on the right can be split further into a sum over the intervals $s_j$, \mbox{{$1\leq j <k$}}, 
\begin{align*}
	\tilde \pi(p_{k}^2) = 
	\sum_{j=1}^{k-1} \left[ \tilde \pi(p_{j+1}^2) - \tilde \pi(p_{j}^2) \right] = \sum_{j=1}^{k-1} \tilde \pi_j. 
\end{align*}
The second term on the right in \eqref{eq:tildepi_split} represents the mean number of primes in the interval $[p_k^2, x]$. Since this interval is contained within $s_k$ it follows that 
\begin{align*}
	\tilde \pi(x) - \tilde \pi(p_{k}^2) 
	&= \left(x-p_k^2\right) \cdot \prod_{p \in \mathcal P_k} \left(1- \frac{1}{p}\right)\\
	&= \frac{x-p_k^2}{l_k}  \left[ l_k \cdot \prod_{p \in \mathcal P_k} \left(1- \frac{1}{p}\right) \right]\nonumber\\
	&= \frac{x-p_k^2}{l_k}  \, \tilde \pi_k. \nonumber
\end{align*}
Combining these two results produces \eqref{eq:tildepi_x}.
\end{proof}
\begin{thm}
\label{thm:MPNT1} 
Let $\tilde \pi_k$ denote the expected number of primes in the interval $s_k$. When $k \rightarrow \infty$ we have that 
\begin{align}
	\tilde \pi_k \sim  2 \textrm{e}^{-\gamma} \frac{l_k}{\log p_{k+1}^2}.
\label{eq:tildepi_k3}
\end{align}
\end{thm}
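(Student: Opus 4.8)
The plan is to start from the closed form for $\tilde \pi_k$ supplied by \lemref{lem:lem1}, namely $\tilde \pi_k = l_k \prod_{p \in \mathcal P_k}(1 - 1/p)$, and to extract the asymptotics of the Euler product via Mertens' third theorem. Since the largest prime appearing in $\mathcal P_k$ is $p_k$, Mertens' theorem gives
\begin{align*}
	\prod_{p \in \mathcal P_k}\left(1 - \frac{1}{p}\right) = \prod_{p \leq p_k}\left(1 - \frac{1}{p}\right) \sim \frac{\textrm{e}^{-\gamma}}{\log p_k}
\end{align*}
as $k \to \infty$. Multiplying through by $l_k$ immediately yields $\tilde \pi_k \sim \textrm{e}^{-\gamma}\, l_k / \log p_k$.

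It then remains to rewrite $\log p_k$ in the denominator as $\tfrac12 \log p_{k+1}^2$. Writing $\log p_k = \tfrac12 \log p_k^2$, the claim reduces to showing that $\log p_k \sim \log p_{k+1}$, i.e. that the ratio $\log p_{k+1}/\log p_k \to 1$. This I would obtain from Bertrand's postulate: since $p_k < p_{k+1} < 2 p_k$, we have $\log p_k < \log p_{k+1} < \log 2 + \log p_k$, whence $1 < \log p_{k+1}/\log p_k < 1 + \log 2 / \log p_k \to 1$. (Alternatively, the estimate $p_k \sim k \log k$ gives $\log p_k \sim \log k \sim \log p_{k+1}$ just as quickly.) Substituting back produces
\begin{align*}
	\tilde \pi_k \sim \frac{\textrm{e}^{-\gamma}\, l_k}{\log p_k} = \frac{\textrm{e}^{-\gamma}\, l_k}{\tfrac12 \log p_k^2} \sim \frac{2\,\textrm{e}^{-\gamma}\, l_k}{\log p_{k+1}^2},
\end{align*}
which is exactly \eqref{eq:tildepi_k3}.

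There is no serious obstacle here; the argument is a direct application of Mertens' theorem followed by an essentially cosmetic change of variable in the denominator. The only point requiring a little care is the passage from $\log p_k$ to $\log p_{k+1}$, because the natural output of Mertens' product is governed by the largest prime $p_k$ actually sieved, whereas the target expression is phrased in terms of $p_{k+1}^2$, the first composite not yet removed at step $k$. Flagging that these two normalisations agree asymptotically---because consecutive primes are logarithmically indistinguishable---is the one step worth stating explicitly, and it is also the conceptually meaningful one, since it is the endpoint $p_{k+1}^2$ that reflects the sieving interpretation emphasised in the surrounding discussion.
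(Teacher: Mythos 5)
Your proof is correct and rests on the same two ingredients as the paper's own argument: the product formula of \lemref{lem:lem1} and Mertens' third theorem applied to $\prod_{p\in\mathcal P_k}\left(1-\frac{1}{p}\right)$. The only difference is bookkeeping in the denominator---you evaluate Mertens at $p_k$ and pass from $\log p_k$ to $\log p_{k+1}$ via Bertrand's postulate, whereas the paper states Mertens at $\sqrt{x}$ for an arbitrary $x\in s_k$ (legitimate because $\mathcal P_k$ is exactly the set of primes $\le \sqrt{x}$ for every such $x$) and then selects $x=p_{k+1}^2-\epsilon$ as the choice minimizing the explicit error bound \eqref{eq:mertenerror}; as you yourself note, the two normalisations $\log p_k^2$ and $\log p_{k+1}^2$ agree asymptotically, so both routes deliver \eqref{eq:tildepi_k3}.
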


\begin{proof} Assume that $x$ is a real number within the interval $s_k$, so that $p_k^2 \leq x < p_{k+1}^2$, and consider the situation when $k \rightarrow \infty$. Then we can state Merten's product theorem 
\mbox{\cite{Villarino:2005to}} 
as
\begin{align*}
	\prod_{p \in \mathcal  P_k} (1-\frac{1}{p}) = \e^{-\gamma+\delta}\frac{1}{ \log \sqrt{x}} = 2 \e^{-\gamma+\delta}\frac{1}{ \log x}, 
\end{align*}
where $\gamma$ is the Euler--Mascheroni constant and $\delta$ is a measure of the uncertainty of the approximation, satisfying
\begin{align}
	|\delta | < \frac{4}{\log (\sqrt{x}+1)} + \frac{2}{\sqrt{x} \log \sqrt{x}} + \frac{1}{2 \sqrt{x}}.  
\label{eq:mertenerror}
\end{align}
It now follows immediately from \eqref{eq:tildepi_k2} that 
\begin{align}
	\tilde \pi_k =  2 \textrm{e}^{-\gamma+\delta} \frac{l_k}{\log x}
\label{eq:tildepi_k_x}	
\end{align}
for any $x$ such that $p_k^2\leq x < p_{k+1}^2$. Additionally, \eqref{eq:mertenerror} implies that in order to minimize the error we should choose $x$ as large as possible---that is, $x=p_{k+1}^2-\epsilon$, where $\epsilon>0$ is infinitesimal---by which we obtain
\begin{align*}
	\tilde \pi_k
	\sim 2 \e^{-\gamma} \frac{l_k}{ \log  p_{k+1}^2}. 
\end{align*}

Note that even without access to the error term \eqref{eq:mertenerror}, it would be natural to argue heuristically for the choice of $x=p_{k+1}^2$ by the following reasoning:
The estimate of $\tilde \pi_k$ should reflect the characteristic property of the sieving process, that sieving by the $k$ first primes removes all composites up to and including $p_{k+1}^2-1$. 
\end{proof}

\begin{thm}
\label{thm:MPNT2}
Let $\tilde \pi(x)$ denote the expected number of primes less than or equal to $x$. When $x \rightarrow \infty$ we have that 
\begin{align*}
	\tilde \pi(x) \sim  2 \e^{-\gamma} \li (x).
\end{align*}
\end{thm}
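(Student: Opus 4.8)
The plan is to feed the exact decomposition of \lemref{lem:lem2} into the per-interval asymptotic of \thmref{thm:MPNT1} and to recognise the resulting sum as a Riemann sum for $\li$. Fix $x$ and let $k$ be the index with $p_k^2 \le x < p_{k+1}^2$. By \lemref{lem:lem2},
\[
	\tilde\pi(x) = \sum_{j=1}^{k-1}\tilde\pi_j + \frac{x-p_k^2}{l_k}\,\tilde\pi_k,
\]
so it suffices to understand the partial sum $\sum_{j=1}^{k-1}\tilde\pi_j$ and to check that the final fractional term is negligible.

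First I would pass from the individual asymptotics to the sum. \thmref{thm:MPNT1} gives $\tilde\pi_j \sim 2\e^{-\gamma}\, l_j/\log p_{j+1}^2$, and since the comparison terms are positive with divergent sum, the standard fact that $a_j\sim b_j$ with $b_j>0$ and $\sum b_j=\infty$ forces $\sum_{j\le k}a_j\sim\sum_{j\le k}b_j$ yields
\[
	\sum_{j=1}^{k-1}\tilde\pi_j \sim 2\e^{-\gamma}\sum_{j=1}^{k-1}\frac{l_j}{\log p_{j+1}^2}.
\]
The right-hand sum is exactly the right-endpoint Riemann sum of $1/\log t$ for the partition $p_1^2 < p_2^2 < \dots < p_k^2$, since $l_j = p_{j+1}^2 - p_j^2$ is the width of the $j$th panel and $1/\log p_{j+1}^2$ is the integrand evaluated at its right endpoint. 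The goal is then to show this sum is asymptotic to $\int_{p_1^2}^{p_k^2} dt/\log t = \li(p_k^2)+O(1)$.

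The main obstacle is precisely this Riemann-sum convergence: the integral tends to infinity while the panel widths $l_j$ also grow, so I must show the \emph{relative} error vanishes rather than merely the error per panel. Since $1/\log t$ is decreasing, each panel error is nonnegative and bounded by $l_j\bigl(1/\log p_j^2 - 1/\log p_{j+1}^2\bigr)$; summing and telescoping the decreasing factor bounds the total error by $\bigl(\max_{j<k} l_j\bigr)\cdot\bigl(1/\log p_1^2 - 1/\log p_k^2\bigr)$, hence by a constant multiple of $\max_{j<k} l_j \le 2 p_k\, G(p_k)$, where $G(p_k)$ is the largest prime gap below $p_k$. The decisive input is therefore a bound on prime gaps: because $p_{j+1}\sim p_j$ the partition is asymptotically fine in multiplicative terms, and any Hoheisel-type estimate giving $G(p_k)=o(p_k/\log p_k)$, so that $\max_{j<k} l_j = o(p_k^2/\log p_k)$, makes the total error $o(\li(x))$. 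This is the one genuinely number-theoretic step; the rest is bookkeeping.

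Finally I would dispatch the leftover term and align the upper limit with $x$. The fractional term satisfies $\tfrac{x-p_k^2}{l_k}\tilde\pi_k \le \tilde\pi_k \sim 2\e^{-\gamma}\, l_k/\log p_{k+1}^2$, and since $l_k\approx 2p_k(p_{k+1}-p_k)$ with $p_{k+1}-p_k=o(p_k)$ this is $o(p_k^2/\log p_k)=o(\li(x))$, hence negligible. Likewise $p_k^2 \le x < p_{k+1}^2$ with $p_{k+1}\sim p_k$ gives $\li(x)\sim\li(p_k^2)$, since the single interval $[p_k^2,p_{k+1}^2]$ contributes a vanishing fraction of $\li$. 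Combining the three estimates yields $\tilde\pi(x)\sim 2\e^{-\gamma}\li(p_k^2)\sim 2\e^{-\gamma}\li(x)$, as claimed.
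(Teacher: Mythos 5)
Your proof is correct, but it takes a genuinely different---and heavier---route than the paper's. The paper performs no Riemann-sum error analysis at all: it defines $\li_j := \int_{p_j^2}^{p_{j+1}^2} dt/\log t$, observes the sandwich $l_j/\log p_{j+1}^2 < \li_j < l_j/\log p_j^2$, and notes that $\tilde\pi_j$ is asymptotic to \emph{both} endpoint expressions simultaneously (they differ by the factor $\log p_{j+1}^2/\log p_j^2 \to 1$, which needs only Bertrand's postulate, cf.\ \remref{rem:relerror}); hence $\tilde\pi_j \sim 2\e^{-\gamma}\li_j$ directly, the sum $\sum_{j<k}\li_j$ telescopes \emph{exactly} to $\li(p_k^2)-\li(4)$, and the fractional last term of \lemref{lem:lem2} is kept and matched against $\li(x)-\li(p_k^2)$ rather than discarded. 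You instead compare only with the right-endpoint rule, and your global error bound $\bigl(\max_{j<k} l_j\bigr)\bigl(1/\log p_1^2 - 1/\log p_k^2\bigr)$ is crude enough that you are forced to import a prime-gap estimate $G(p_k)=o(p_k/\log p_k)$ of Hoheisel (or PNT-with-error-term) strength, and you need $g_k=o(p_k)$ again both to discard the fractional term and to conclude $\li(x)\sim\li(p_k^2)$. This is legitimate---not circular, since the theorem concerns $\tilde\pi$, and the paper itself invokes the PNT elsewhere---but it is far more machinery than the statement requires. Note that your own per-panel bound, refined to a \emph{relative} error via Bertrand, $e_j \le l_j\bigl(1/\log p_j^2 - 1/\log p_{j+1}^2\bigr) \le l_j \log 4/(\log p_j^2)^2$, sums to $O\bigl(x/\log^2 x\bigr) = o(\li(x))$ and would eliminate the gap hypothesis entirely; that, in essence, is the efficiency the paper's sandwich-and-telescope argument achieves for free.
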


\begin{proof} We start by defining $\li_k$ as the logarithmic integral over the interval $s_k$,
\begin{align*}
	\li_k:= \li(p_{k+1}^2) - \li(p_{k}^2) = \int_{p_k^2}^{p_{k+1}^2} \frac{dt}{\log t}. 
\end{align*}
As $\log a < \log b$ whenever $a<b$ for any real numbers $a, b>0$, $\li_k$ satisfies
\begin{align}
	\frac{l_k}{\log {p_{k+1}^2} } < \li_k < \frac{l_k}{\log {p_{k}^2} }, 
\label{eq:li_inequal}
\end{align}
and since---from \eqref{eq:tildepi_k_x}---the following relations hold when $k \rightarrow \infty$,
\begin{align*}
	\tilde \pi_k \sim 2 \e^{-\gamma} \frac{l_k}{\log {p_{k+1}^2} } \quad \textrm{and} \quad \tilde \pi_k \sim 2 \e^{-\gamma} \frac{l_k}{\log {p_{k}^2} },
\end{align*}
we have that 
\begin{align}
	\tilde \pi_k \sim 2 \e^{-\gamma} \li_k.
\label{eq:li_k_mertens}
\end{align}

Now, let $k$ be the integer such that $p_k^2 \leq x < p_{k+1}^2$. Then it follows from \eqref{eq:tildepi_x} and \eqref{eq:li_k_mertens} that
\begin{align*}
	\tilde \pi(x) 
	&=  \sum_{j=1}^{k-1} \tilde \pi_j + \frac{x-p_k^2}{l_k} \, \tilde \pi_k \nonumber\\
	&\sim 2 \e^{-\gamma} \sum_{j=1}^{k-1} \li_j + \frac{x-p_k^2}{l_k} \, 2 \e^{-\gamma} \li_k \\
	&= 2 \e^{-\gamma} \li(x)\nonumber.
\end{align*}
Strictly, the last expression should be $2 \e^{-\gamma} \li(x)-2$, since $\li(x)$ has lower integration limit $x=2$, thereby including the 2 first primes 2 and 3. But that difference is insignificant as $\toinf{x}$.
\end{proof}

\begin{rem}
\label{rem:relerror} 
Note that we can easily obtain an upper bound on the relative error 
\begin{align}
	\eta_k 
	&:= \frac{l_k/\log {p_{k}^2} - l_k/\log {p_{k+1}^2} }{l_k/\log {p_{k+1}^2}} 
	=\frac{\log {p_{k+1}^2}}{\log {p_{k}^2}} - 1
\label{eq:eta1}
\end{align}
by applying Bertrand's postulate. This states that for any integer $n>1$ we will always find a prime $p$ such that $n < p < 2 n$. Choosing $n$ to be $p_k$, we see that $p_k < p_{k+1} < 2 p_{k}$ and therefore that $p_{k+1}^2 < 4 p_{k}^2$. Then it follows from \eqref{eq:eta1} that 
\begin{align}
	\eta_k 
	=\frac{\log {p_{k+1}^2}}{\log {p_{k}^2}} - 1 
	<\frac{\log {4p_{k}^2}}{\log {p_{k}^2}} - 1 
	=\frac{\log {4}}{\log {p_{k}^2}}.
\label{eq:eta2}
\end{align}
So, as $k\rightarrow \infty$, the relative error approaches 0. Naturally, the actual error approaches 0 much faster than what we found here, as Bertrand's postulate is a very crude approximation to the average distance between primes. Applying the prime number theorem, the average gap length around $p_k$ is $\log p_k$, and hence $p_{k+1}\sim p_k + \log p_k$. The expected relative error should therefore satisfy
\begin{align*}
	\eta_k 
	&\approx \frac{\log ({p_{k} + \log p_k}) }{\log {p_{k}} } - 1. 
\end{align*}
Necessarily, since $\li_k$ is bounded as shown in \eqref{eq:li_inequal}, the relative error of any numerical approximation to $\li_k$ must obey \eqref{eq:eta2} and approach 0 as $k
\rightarrow \infty$.
\end{rem}

\begin{thm} 
\label{thm:PNT1}
Let $\pi_k$ denote the number of primes in the interval $s_k$. When $k \rightarrow \infty$ we have that 
\begin{align}
	\pi_k \sim  \frac{l_k}{\log p_{k+1}^2}.
\label{eq:PNT1}	
\end{align}
\end{thm}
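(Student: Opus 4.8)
The plan is to run the same argument as in \thmref{thm:MPNT1}, but with the genuine prime number theorem in place of the probabilistic sieve estimate. First I would note that $p_k^2$ and $p_{k+1}^2$ are both composite, so counting the primes of $s_k$ is the same as counting those strictly between the endpoints; writing $\pi$ for the true prime-counting function this gives $\pi_k = \pi(p_{k+1}^2)-\pi(p_k^2)$. Next I would invoke the prime number theorem in the form $\pi(x)\sim\li(x)$ to replace this difference by the logarithmic integral over $s_k$, namely $\li_k := \li(p_{k+1}^2)-\li(p_k^2)$. Finally, \eqref{eq:li_inequal} together with \remref{rem:relerror}---where the relative error $\eta_k\to 0$---squeezes $\li_k$ between $l_k/\log p_{k+1}^2$ and $l_k/\log p_k^2$ and so yields $\li_k \sim l_k/\log p_{k+1}^2$; the theorem would then follow by transitivity.

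The delicate step, and where I expect the real obstacle to lie, is the middle one: passing from $\pi(x)\sim\li(x)$ to $\pi_k\sim\li_k$. Asymptotic equivalence is not preserved under subtraction, so $\pi(x)\sim\li(x)$ does not by itself force $\pi(p_{k+1}^2)-\pi(p_k^2)\sim\li(p_{k+1}^2)-\li(p_k^2)$. To make the step rigorous I would write $\pi(x)=\li(x)+E(x)$ and seek to show the error difference is negligible against the main term, that is $E(p_{k+1}^2)-E(p_k^2)=o(\li_k)$.

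To gauge how strong this requirement is, observe that $p_{k+1}-p_k\sim\log p_k$ on average, so $l_k=(p_{k+1}-p_k)(p_{k+1}+p_k)\sim 2p_k\log p_k$ while $\log p_{k+1}^2\sim 2\log p_k$; hence $l_k/\log p_{k+1}^2\sim p_k$ and likewise $\li_k\sim p_k$. Setting $x=p_k^2$, the interval $s_k$ has length of order $\sqrt{x}\,\log x$ about $x$, so \thmref{thm:PNT1} is in substance a \emph{primes-in-short-intervals} statement at scale $\sqrt{x}$. This is beyond the classical error term: the de la Vall\'ee-Poussin bound $|E(x)|=O(x\,\e^{-c\sqrt{\log x}})$ only gives $|E(p_k^2)|=O(p_k^2\,\e^{-c\sqrt{2\log p_k}})$, which is of larger order than $\li_k\sim p_k$ and so cannot certify the cancellation; even the Riemann hypothesis, which yields the short-interval asymptotic only for intervals longer than $\sqrt{x}\,\log^2 x$, falls short of the scale $\sqrt{x}\,\log x$ by a logarithmic factor, and the strongest known short-interval results (Huxley-type, reaching intervals of length about $x^{7/12}$) do not descend to the exponent $1/2$ needed here. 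The honest conclusion is that, at the stated per-$k$ strength, the result seems to require genuine short-interval input rather than the bare prime number theorem. If that proves unattainable, I would retreat to the summed form $\sum_{j\le k}\pi_j\sim\sum_{j\le k}\li_j$, which does follow directly from $\pi(x)\sim\li(x)$ and already suffices for the $\pi(x)\sim\li(x)$ reformulation that is the paper's ultimate goal.
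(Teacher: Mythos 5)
Your proposed chain is, step for step, the paper's own proof of \thmref{thm:PNT1}: the paper writes $\pi_k = \pi(p_{k+1}^2)-\pi(p_k^2) \sim \li(p_{k+1}^2)-\li(p_k^2) = \li_k \approx l_k/\log p_{k+1}^2$, declares that this ``follows directly from the original prime number theorem,'' and handles the final approximation exactly as you do, via \eqref{eq:li_inequal} and \remref{rem:relerror}. So in approach you have reproduced the paper's argument; the difference is that you stopped to examine the middle step, and the paper did not.

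Your diagnosis of that step is correct: it is a genuine gap, in the paper's proof as much as in any blind reconstruction of it. Since $\sim$ is not preserved under subtraction, $\pi(x)\sim\li(x)$ gives no control over $\pi(p_{k+1}^2)-\pi(p_k^2)$ unless the error $E(x)=\pi(x)-\li(x)$ is shown to be small compared with $\li_k$, and your accounting of scales is right: on average $\li_k \asymp p_k$, so the claim is a prime-count asymptotic in intervals of length roughly $\sqrt{x}\log x$ around $x=p_k^2$, out of reach of the classical error term, of RH (which needs length $\gg \sqrt{x}(\log x)^2$), and of Huxley-type results (length about $x^{7/12}$). Your ``on average'' remark can even be sharpened into an unconditional obstruction: by the bounded-gap theorems of Zhang and Maynard, $g_k \le 246$ for infinitely many $k$, so $l_k = O(\sqrt{x})$ along an infinite subsequence of $k$; at that scale even the existence of a single prime in $s_k$ is a statement of Legendre-conjecture strength and is open. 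Hence \eqref{eq:PNT1}, read as a limit over all $k$, does not follow from the prime number theorem and is, with current technology, a (plausible) conjecture---consistent with Selberg's almost-all theorem cited later in the paper, but not implied by it, since the points $p_k^2$ form a density-zero set. Your proposed retreat, $\sum_{j\le k}\pi_j \sim \sum_{j\le k}\li_j$, is precisely the content that \thmref{thm:MPNT2} and \thmref{thm:PNT2} genuinely support, and it is the correct rigorous reading of what the paper establishes; the per-interval statement should be regarded as heuristic unless supplemented by real short-interval input.
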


\begin{proof}
The truth of the statement follows directly from the original prime number theorem, since
\begin{align*}
	\pi_k 
	&= \pi(p_{k+1}^2)-\pi(p_{k}^2) \sim \li(p_{k+1}^2)-\li(p_{k}^2) = \li_k \\
	&= \int_{p_{k}^2}^{p_{k+1}^2} \frac{dt}{\log t} \approx \frac{l_k}{\log p_{k+1}^2}.\nonumber
\end{align*}
The choice of the integral approximation 
\begin{align*}
	\frac{l_k}{\log p_{k+1}^2}
\end{align*}
rather than e.g. 
\begin{align*}
	\frac{l_k}{\log p_{k}^2} \quad \textrm{or} \quad \frac{l_k}{\log \frac{1}{2}\left(p_{k}^2 +p_{k+1}^2 \right)}
\end{align*}
is motivated by the proof of \thmref{thm:MPNT1}, while the accuracy of the approximation is discussed in \remref{rem:relerror} above.
\end{proof}

\begin{thm}
\label{thm:PNT2}
Let $\pi(x)$ denote the number of primes less than or equal to $x$. When $x \rightarrow \infty$ we have that 
\begin{align}
	\pi(x) \sim  \li (x).
\label{eq:PNT2}
\end{align}
\end{thm}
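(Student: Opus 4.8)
The plan is to run the argument of the proof of \thmref{thm:MPNT2} almost verbatim, but with the \emph{actual} interval counts $\pi_k$ in place of the expected counts $\tilde \pi_k$ and with the constant $2\e^{-\gamma}$ replaced by $1$. The engine is a termwise asymptotic equivalence $\pi_k \sim \li_k$: \thmref{thm:PNT1} supplies $\pi_k \sim l_k/\log p_{k+1}^2$, while the two-sided bound \eqref{eq:li_inequal} together with the relative-error estimate of \remref{rem:relerror} gives $l_k/\log p_{k+1}^2 \sim \li_k$ as $k \to \infty$; transitivity then yields
\begin{align*}
	\pi_k \sim \li_k,
\end{align*}
the exact analogue of \eqref{eq:li_k_mertens}.

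Next I would fix $k$ with $p_k^2 \le x < p_{k+1}^2$ and split both quantities over the completed intervals plus a terminal remainder,
\begin{align*}
	\pi(x) = \sum_{j=1}^{k-1}\pi_j + \bigl[\pi(x) - \pi(p_k^2)\bigr], \qquad \li(x) = \sum_{j=1}^{k-1}\li_j + \bigl[\li(x) - \li(p_k^2)\bigr],
\end{align*}
where the telescoping identity $\sum_{j=1}^{k-1}\li_j = \li(p_k^2) - \li(p_1^2)$ makes $\li(x)$ itself appear as the continuum sum of the interval estimates. Comparing the two decompositions reduces the theorem to two claims: that the leading sums agree asymptotically, $\sum_{j=1}^{k-1}\pi_j \sim \sum_{j=1}^{k-1}\li_j$, and that both terminal remainders are negligible relative to $\li(x)$.

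The main obstacle is the first claim, since termwise equivalence $\pi_j \sim \li_j$ does not transfer to the partial sums for free---the early terms (small $j$) are nowhere near equivalent. I would close this with a Stolz--Cesàro lifting: because $\li_j > 0$ and the denominators $\sum_{j=1}^{k-1}\li_j = \li(p_k^2) - \li(4)$ diverge, the ratio of the accumulated sums inherits the limit $1$ of the termwise ratios, so the finitely many anomalous initial terms wash out against the growing denominator. (One may also sidestep this step altogether, since $\sum_{j=1}^{k-1}\pi_j = \pi(p_k^2)$ telescopes exactly and $\pi(p_k^2) \sim \li(p_k^2)$ is the classical prime number theorem evaluated at $p_k^2$.) For the remainders, $0 \le \pi(x) - \pi(p_k^2) \le \pi_k \sim \li_k$ and likewise $0 \le \li(x) - \li(p_k^2) \le \li_k$; since $p_{k+1} \sim p_k$ forces $\li_k = \li(p_{k+1}^2) - \li(p_k^2) = o(\li(p_k^2))$, both remainders are $o(\li(x))$. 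Chaining the pieces gives $\pi(x) \sim \sum_{j=1}^{k-1}\li_j \sim \li(x)$, which is \eqref{eq:PNT2}. As in \thmref{thm:MPNT2}, the discrepancy caused by $\li$ beginning its integration at $2$ is merely an additive constant, hence irrelevant as $x \to \infty$.
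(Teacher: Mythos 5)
Your proof is sound step by step, but it does considerably more than the paper's own proof, and it is worth being precise about what that buys and what it cannot buy. The paper's proof of \thmref{thm:PNT2} is a single sentence: it observes that \eqref{eq:PNT2} \emph{is} the classical prime number theorem, hence needs no new proof, and that the theorem's actual content is interpretive---$\li(x)$ should be read as the continuum approximation of $\sum_k \pi_k$, in parallel with \thmref{thm:MPNT2}. You, by contrast, actually execute that parallel: you derive $\pi_k \sim \li_k$ from \thmref{thm:PNT1} combined with \eqref{eq:li_inequal} and \remref{rem:relerror}, you transfer the termwise equivalence to the partial sums by Stolz--Ces\`aro (a point the paper never addresses, and a genuine issue since the early terms are nowhere near equivalent), and you bound both terminal remainders via $\li_k = o(\li(p_k^2))$. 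All of these steps are correct, and they substantiate the paper's interpretive claim in a way the paper leaves implicit. The one caveat you should state openly: your argument is not, and cannot be, an independent proof of the prime number theorem, because its engine \thmref{thm:PNT1} is itself deduced in the paper from the classical statement $\pi(x) \sim \li(x)$---precisely the statement being proven. Your own parenthetical sidestep (telescoping $\sum_{j=1}^{k-1}\pi_j = \pi(p_k^2)$ and citing the classical theorem at $p_k^2$) makes this circularity explicit; that sidestep is, in effect, the paper's one-line proof. So both routes rest on the same logical foundation, but yours does the real work of showing that the interval-by-interval counts, summed and passed to the continuum, recover $\li(x)$---which is exactly the reading the paper advocates, made rigorous.
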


\begin{proof}
This is already stated by the original prime number theorem, but here---following the proof of \thmref{thm:MPNT2}---we suggest that \eqref{eq:PNT2} should be interpreted as the continuous approximation of $\sum_k \pi_k$, where $\pi_k$ is defined as in \eqref{eq:PNT1}.
\end{proof}

\section{Possible directions}

Initial investigations---theoretical and numerical---suggest that the subdivision of the natural numbers in terms of $s_k$ provides a rich a structure from which to probe deeper in our understanding of the prime numbers; or even, reinterpreting old results in terms of the intervals $s_k$. In the following, we briefly sketch out a few directions where we believe the insights of this paper could have impact, keeping in mind that there are also several additional routes to explore than those mentioned here.
The format is mixed, in the sense that what we present range across rigorous results, heuristics and experimental insights. As such, this section is best read as a communication of ideas possibly worthy of exploration. 
%
%
%

Accompanying this section is also a set of figures included as an appendix. The Mathematica software were used for generating and analyzing data, and creating the final figures. 
%

\subsection{Notation}
\label{sec:notation}
%
%
%
%
%
%

Here we introduce notation that will be used in later sections. First, we construct an arithmetic function that describes which integers $n$ are divisible by a given prime $p_k$:
\begin{align*}
\rho_{k}(n):=\begin{cases}
    p_k & \text{if $p_k \divides n$,}\\
    1 & \text{otherwise}.
  \end{cases}
\end{align*}
Now, let us apply this definition to construct a second arithmetic function that locates all $n$ coprime to $p_k\#:=\prod_{p\in \mathcal P_k}p$:
\begin{align*}
	  R_{k}(n) :=   \prod_{1\leq i \leq k} \rho_{i}(n).
\end{align*}
We see that $(n,p_k\#)=1$ whenever $R_{k}(n)=1$. Also, both $\rho_{k}(n)$ and $R_{k}(n)$ are periodic, satisfying for  any integer $m$ the equalities
\begin{align*}
	\rho_{k}(n + m p_k) = \rho_{k}(n) \quad \textrm{and} \quad 	R_k(n+ m p_k \#) = R_k(n).
\end{align*}
This notation is also useful for visualizing the positions of primes in $s_k$, as exemplified here by $s_3$:\\
{\scriptsize
$$
\begin{tabular}{c|ccccccccccccccc|c}
\obsgray{$s_3$} & 25 & 26 & 27 & 28 & 29 & 30 & 31 & 32 & 33 & 34 & 35 & 36 & 37 & $\cdots$ & 48 &  \\
\hline  \\ [-3mm]
\obsgray{$\rho_1$}  & 1 & $p_1$ & 1 & $p_1$ & 1 & $p_1$ & 1 & $p_1$ & 1 & $p_1$ & 1 & $p_1$ & 1 &$\cdots$ & $p_1$ & $\cdots$\\
\obsgray{$\rho_2$}  &1 & 1 & $p_2$ & 1 & 1 & $p_2$ & 1 & 1 & $p_2$ & 1 & 1 & $p_2$ & 1 &$\cdots$ & $p_2$ & $\cdots$\\
\obsgray{$\rho_3$}  &$p_3$ & 1 & 1 & 1 & 1 & $p_3$ & 1 & 1 & 1 & 1 & $p_3$ & 1 & 1 &$\cdots$ & 1 & $\cdots$\\
\hline  \\ [-3mm]
\obsgray{$R_3$}  & $p_3$ & $p_1$ & $p_2$ & $p_1$ & 1 & $p_1 p_2 p_3$ & 1 & $p_1$ & $p_2$ & $p_1$ & $p_3$ & $p_1p_2$ & 1 &$\cdots$ & $p_1p_2$  & \dots 
\end{tabular}
$$}

Consider next an arbitrary interval $A$. The number of coprimes to $p_k\#$ in $A$ is equivalent to the count of 1s in $R_k(n)$ across $A$, in sieve notation denoted by
%
%
%
%
%
\begin{align*}
	S(A,p_k\#) := |\{n: n\in A, R_k(n)=1 \}|.
\end{align*}
It follows from the periodicity of $R_k(n)$ that also $S(A,p_k\#)$ is periodic; if $A^n$ denotes $A$ left-shifted $n$ times, the equality
\begin{align*}
	S(A^{mp_k\#},p_k\#) = S(A,p_k\#)
\end{align*}
holds for any integer m. We can express $S(A,p_k\#)$ in terms of the Legendre identity as
\begin{align*}
	S(A,p_k\#) = 
	\lfloor |A| \rceil
	- \sum_{q\in \mathcal P_k} \left\lfloor \frac{|A|}{q} \right\rceil
	+ \sum_{\substack{{q_1, q_2 \in \mathcal P_k} \\ {q_1 < q_2}}}   \left\lfloor \frac{|A|}{q_1 q_2} \right\rceil 
	- \dots 
	\pm \sum_{\substack{{q_1, \dots, q_k \in \mathcal P_k} \\ {q_1 < \dots < q_k} }}   \left\lfloor \frac{|A|}{q_1 \dots q_k} \right\rceil,
\end{align*}
where $\lfloor x \rceil$ is defined to mean that $x$ take either values $\lfloor x \rfloor$ or $\lceil x \rceil$. 
In the average case, the Legendre identity has the simpler form
\begin{align*}
	E[S(A,p_k\#)]   = |A| \cdot \prod_{p \in \mathcal P_k} \left(1- \frac{1}{p}\right).
\end{align*}

Assume now the interval $s_k$---the focal object throughout this paper.
%
By the sieve of Eratosthenes we know that in $s_k$, the primes are the only numbers coprime to $p_k\#$, so necessarily,
\begin{align*}
	\pi_k = S(s_k,p_k\#).
\end{align*}
%
In later sections, however, we approach the distribution of primes in $s_k$ probabilistically, 
in the sense that we consider
all possible arrangements of coprimes to $p_k\#$ within an arbitrary interval of length $l_k$.
Because of the periodicity of $R_k(n)$, there are only $p_k\#$ different arrangements, 
which we derive
 in terms of a shifted 
%
version of $s_k$,
\begin{align*}
	s^j_k := \{p_k^2+j,\dots, p_{k+1}^2-1+j\}, \quad 0\leq j < p_k\#.
\end{align*}
%
%
In this notation, the probabilistic prime counting function $\tilde \pi_k$ takes the form
\begin{align*}
	\tilde \pi_k =  \frac{1}{p_k\#} \sum_{j=0}^{p_k\#-1}  S(s_k^j,p_k\#),
\end{align*}
while $\pi_k$ coincides with the case $j=0$,
%
\begin{align*}
	\pi_k = S(s_k^0,p_k\#).
\end{align*}
\subsection{Primes in short intervals}
There is a rich literature on primes in short intervals---see e.g. \cite{Yildirim:1999ws, Yildirim:2009vv, Granville:2010cq}---related to the question for which functions $\Phi(x)$, as $x\rightarrow \infty$, 
\begin{align}
	\pi(x + \Phi(x))-\pi(x) \sim \frac{\Phi(x)}{\log x}.
\label{eq:short}
\end{align}
With respect to this matter, Selberg proved in 1943 \cite{Selberg:1943}---assuming the Riemann hypothesis---that \eqref{eq:short} holds for almost all $x$,  required $\Phi(x)/ (\log x)^2 \rightarrow \infty$ as $x\rightarrow \infty$. It was long thought that this would be true for all $x$ \cite{Granville:2010cq}, but in 1985, Maier \cite{Maier:1985ww} published a proof demonstrating that there are infinitely many exceptions to Selberg's result, even for functions $\Phi(x)$ growing much faster than  $(\log x)^2$. 

What Maier specifically proved---here in the formulation of Granville \cite{Granville:2010cq}---was that for any constant $\lambda>2$, there exists a constant $\delta_{\lambda} > 0$ such that we find arbitrarily large integers $x$ and $X$ for which
\begin{align}
\label{eq:Maier}
	\pi\left(x + (\log x)^{\lambda}\right) - \pi(x) &\geq (1+\delta_{\lambda})(\log x)^{\lambda-1},  \quad \textrm{and}\\
	\pi\left(X + (\log X)^{\lambda}\right) - \pi(X) &\leq (1-\delta_{\lambda})(\log X)^{\lambda-1}. \nonumber
\end{align}

How do our findings connect to these results? First of all, we recognize that the estimate in \eqref{eq:tildepi_k_x} of the number of primes in the interval $s_k$ is literally of the exact same form as \eqref{eq:short}. Choosing $x=p_k^2$ and $\Phi(x)=l_k$, and assuming $\toinf{x}$, we have that
\begin{align*}
	\pi(p_k^2 + l_k)-\pi(p_k^2) = \pi_k \sim  \frac{l_k}{\log p_k^2},
\end{align*}
and it immediately follows that, as $k \rightarrow \infty$, all intervals $s_k$ satisfy  \eqref{eq:short}. 

Now, if we substitute $x$ for $p_{k+1}^2$ and $g$ for $g_k$ in $l_k=2 p_{k+1} g_k - g_k^2$, we find that each $l_k$ lies on one of the curves given by the functions $l_g(x)=2 \sqrt{x} g - g^2$, where $g=2,4,\dots$. Obviously, all functions $l_g(x)$ eventually outgrow any function $(\log x)^{\lambda}$---no matter what value of $\lambda$ we start with---which demonstrate that the intervals $s_k$ fulfill the requirement in Selberg's theorem  that $\Phi(x)/ (\log x)^2 \rightarrow \infty$ as $x\rightarrow \infty$. 

Another consequence is that what we could call Maier's interval, $[x, x+(\log x)^{\lambda}]$, will always be infinitesimal compared to $s_k$ as $k$ moves towards infinity. We can therefore place Maier's interval within parts of the intervals $s_k$ where the fluctuations in prime density deviates from the density predicted by the prime number by a multiplicative constant $\delta_{\lambda}$. The example below should give a clear demonstration of this interpretation. 

A final note is that in Maier's theorem, $\Phi(x)$ is restricted to grow logarithmically, while that is not the case in Selberg's theorem. In some sense this suggests we can view the two theorems as describing the behavior of the distribution of the primes on different length scales. Of course---strictly---Selberg's theorem also applies on the scale of Maier's interval. This can even be understood from the fact that Mayer's theorem guarantees that the density of primes in the interval $[x, x+ \Phi(x)]$ crosses the mean value predicted by the prime number theorem infinitely many times as $\toinf{x}$. An alternative view is therefore that Selberg's theorem is picking out intervals near these crossing points.

\begin{ex}

Let us choose $\lambda = 3$. Then, according to \eqref{eq:Maier}, we should be able to find a $\delta_3$ such that the inequalities
\begin{align}
	\pi\left(x + (\log x)^3 \right) - \pi(x) &\geq (1+\delta_3)(\log x)^{2}  \quad \textrm{and} \label{eq:maierex}\\
	\pi\left(X + (\log X)^{3}\right) - \pi(X) &\leq (1-\delta_3)(\log X)^{2} \nonumber
\end{align}
hold for arbitrarily large $x$ and $X$. While arbitrarily large is beyond our reach, we will examine a few selected intervals---$s_{500}$, $s_{750}$, and $s_{1000}$---such that $\Phi(x)$ is significantly smaller than $l_k$ and fluctuations in prime density within the intervals $s_k$ become apparent. 

The following table provides an overview of how $\Phi(x)$ compares to $l_k$ for the chosen intervals. Note that as $\Phi(x)$ is almost constant across an interval $s_k$ for large $k$, so is the ratio $\Phi(x)/l_{k}$. The values of $\Phi(x)$ are rounded to nearest integer and those of $\Phi(x)/l_{k}$ are presented with two significant digits.
\vspace{3mm}
\begin{center}
  \begin{tabular}{ c | c | c | c | c | c}
    \hline
    k 		& $g_k$	& $l_k$	& $\Phi(p_{k}^2)$ 	& $\Phi(p_{k+1}^2)$ & $\Phi(p_{k}^2)/l_{k}$ 	\\ \hline
    500 	& 10		& 71250 		& 4380 			& 4384 		& 0.061 				\\ \hline
    750 	& 8		& 91152 		& 5172 			& 5175 		& 0.057 				\\ \hline
    1000 	& 8		& 126768 		& 5787 			& 5789 		& 0.046 				\\
    \hline
  \end{tabular}
\end{center}
\vspace{3mm}
For a broader comparison, we can observe how $\Phi(x)$ compares to $l_g(x)$ across a range of values for $x$. This is done in \figref{fig:interval_lengths}, where we have plotted the values of $l_k$ together with $l_g(x)=2 \sqrt{x} g - g^2$ and $\Phi(x)=(\log x)^3$. When $x$ is within the shaded region, $\Phi(x)$ is larger or equal to some $l_k$, in particular all those $l_k$ that lie on the curve $l_2(x)$. But put $x$ beyond shaded the region, and $\Phi(x)$ is smaller than $l_g(x)$ for all values of $g$, and consequently also smaller than any interval length $l_k$. The behavior seen here is general; a different choice of lambda would stretch out the shaded region further, but no matter the size of $\lambda$, when $x$ crosses the threshold into the non-shaded region, $\Phi(x)$ is left behind by every $l_k$. 

To illustrate how the prime counting function $\pi(x)$ relates to $\li(x)$ when $x$ is contained within an interval $s_k$, we define
\begin{align}
	\li_k(x) &:= \li(x) - \li(p_k^2) \quad \textrm{and} 
	\label{eq:li_and_pi}\\
	\pi_k(x) &:= \pi(x) - \pi(p_k^2), \nonumber
\end{align}
so that both functions are 0 at the start of the interval and  equal to $\li_k$ and $\pi_k$, respectively, at the end of the interval. We contrast the functions in \eqref{eq:li_and_pi} by plotting the error term
\begin{align*}
	\pi_k(x) - \li_k(x)
\end{align*}
across the intervals $s_{500}$, $s_{750}$, and $s_{1000}$, as shown in \figref{fig:counterPlots}. Visually, the wandering of the error term gives the impression of a random walk; it strays away from the theoretical estimate, sometimes smaller, sometimes larger, revealing regions with lower or higher density of primes than suggested by the prime number theorem. Note that in the same plots we have also included the standard deviation $\sigma_k(n)$ of a binomial distribution with probability of success $1/\log {p_{k+1}^2}$, and sequence length $n$, where $1\leq n \leq l_k$, such that
\begin{align*}
	\sigma_k(n) = 
	\sqrt	
	{
		\frac{n}{\log {p_{k+1}^2}}\left( 1 - \frac{1}{\log {p_{k+1}^2}} \right).
	}
\end{align*}
The reason for doing this is discussed in \secref{sec:errorterm}.

Complementary to plotting $\pi_k(x) - \li_k(x)$ we also examine the distribution of gap sizes within an interval. Since the gap distribution is inversely related to the density of primes it helps give an impression of fluctuations in prime density. In \figref{fig:gapsPlots}, we have plotted the prime gaps in each interval $s_{500}$, $s_{750}$, and $s_{1000}$, together with measured average gap size and expected theoretical gap size $g=\log p_{k+1}^2$ (both estimates coincide on the scale of the figure). We also show the moving average of gap sizes with runs taken over 25 elements, demonstrating how the average gap size shifts across the intervals. Note that both \figref{fig:counterPlots} and \figref{fig:gapsPlots} make obvious the presence of significant fluctuations in prime density within the chosen intervals $s_k$. 

Now, we are interested in the specific subintervals within $s_k$ defined by Maier's interval $[x, x+\Phi(x)]$, where $p_k^2\leq x < p_{k+1}^2-\Phi(x)$. We can measure exactly how large the fluctuations in prime density are in these subintervals by calculating the ratio between the number of primes in $[x, x+\Phi(x)]$ and the estimate suggested by the prime number theorem, $\Phi(x)/\log x$. More specifically, we calculate the ratios
\begin{align*}
	\frac{\pi\left(x + (\log x)^3 \right) - \pi(x)}{(\log x)^{2}},
\end{align*}
which we then use to locate an appropriate value of $\delta_3$. The result is displayed in \figref{fig:maierDeltaPlots}, where we have plotted the calculated ratios as $x$ moves across each of the intervals $s_{500}$, $s_{750}$, and $s_{1000}$. The largest value we can choose for $\delta_3$ in order to find examples of $x$ and $X$ in all three intervals that confirm to \eqref{eq:maierex} is $\delta_3\approx 0.064$ (since the minimal largest deviation from 1 across the intervals---found in the interval $s_{500}$---has this value). In the figure, we have chosen $\delta_3=0.03$  as an illustration. In each plot, the shaded region contains the ratio values of Maier's intervals where the density of primes is within a factor $1 \pm \delta_3$ from that predicted by the prime number theorem, while all ratio values falling outside the shaded area correspond to Maier's intervals where the prime density is $1 \pm \delta_3$ times higher or lower  than the theoretical density. 

Also shown in \figref{fig:maierDeltaPlots} is the ratio $\pi_k/(l_k/\log p_{k+1}^2)$, demonstrating how much the prime density of the whole interval $s_k$ varies from the prime number theorem estimate. Note that there is no connection between the increasing lengths of the intervals and the observed increasing ratio $\pi_k/(l_k/\log p_{k+1}^2)$. In fact, if we plot $\pi_k/(l_k/\log p_{k+1}^2)$ across a large range of $k$, as done in \figref{fig:ratioPrimesVsEstimate}, we observe that the convergence of the ratios towards 1 is creepingly slow, with significant fluctuations which magnitude depends on the corresponding gap size $g_k$; the smaller the gap size, the larger the fluctuations, and vice versa.   

An important final note is that we do not show that this $\delta_3$ is the proper choice in Maier's theorem. However, Maier's theorem guarantees that for any $\lambda$, there must be a $\delta_{\lambda}$ so that \eqref{eq:maierex} holds for arbitrarily large $x$ and $X$. In theory therefore, we could for any given $\lambda$ construct an infinite series of ratio plots similar to those in \figref{fig:maierDeltaPlots}, each corresponding to a different interval $s_k$. By choosing the correct $\delta_{\lambda}$ all plots would show the same shaded region, and importantly, all plots would have curves traveling outside this space. 
\end{ex}

\subsection{Shrinking the divide between $\tilde \pi_k$ and $\pi_k$}
\label{subseq:shrink}
Previously, we have encountered two similar estimates for the number of primes within an interval $s_k$. One was the probabilistic estimate derived from Merten's product theorem \eqref{eq:MPNT00},
\begin{align}
	\tilde \pi_k  \sim 2 \e^{-\gamma} \frac{l_k}{\log p_{k+1}^2},
\label{eq:mertens_k_shrink}	
\end{align}
the other was the estimate given by the prime number theorem \eqref{eq:PNT0},
\begin{align*}
\pi_k = \frac{l_k}{\log p_{k+1}^2}.
\end{align*}
The question we ask here is whether it is possible to work from a probabilistic perspective, and then exploit properties of $s_k$ to push $\tilde \pi_k$ closer to $\pi_k$.
While analyzing the sequences $s_k$ in terms of modern sieve theory would be a natural approach here, that is outside the scope of this paper. Rather, we go for a brief stab at the problem---in particular useful for building intuition---to arrive at a few interesting observations.
%

Let us first of all explain what we mean by a probabilistic perspective. In \secref{sec:notation} we learned that the probabilistic prime counting function $\tilde \pi_k$ can be stated as 
\begin{align*}
	\tilde \pi_k =  \frac{1}{p_k\#} \sum_{j=0}^{p_k\#-1}  S(s_k^j,p_k\#),
\end{align*}
while $\pi_k$ is given by
\begin{align*}
	\pi_k = S(s_k^0,p_k\#).
\end{align*}
As our starting point for counting the number of primes in the interval $s_k$, assume the only information available is the length $l_k$ of the interval. Then the only certainty we have is that $\pi_k$ takes one of the values $S(s_k^j,p_k\#)$, $0\leq j < p_k\#$. This is the largest sample space possible and the expected number of primes in the interval is given by $\tilde \pi_k$. Now, clearly, to shrink the gap between $\tilde \pi_k$ and $\pi_k$, we need to reduce this sample space in such a way that it still contains $S(s_k^0,p_k\#)$.

Of course, $S(s_k^0,p_k\#)$ is completely determined by where in the natural numbers the interval $s_k$ is situated (allowing for shifts divisible by $p_k\#$), and in the end, the constraints that will push $\tilde \pi_k$ towards $\pi_k$ must in reflect that.
A very natural constraint is for example that when $n\in s_k$, all elements in $R_k(n)$  must be strictly smaller than $p_{k+1}^2$.
This constraint is easily implemented into the Legendre identity by allowing only denominators smaller than $p_{k+1}^2$:
\begin{align*}
	S(s_k^j,p_k\#) = 
	\lfloor l_k \rceil
	- \sum_{q\in \mathcal P_k} \left\lfloor \frac{l_k}{q} \right\rceil
	+ \sum_{\substack{{q_1, q_2 \in \mathcal P_k} \\ {q_1 < q_2}}}   \left\lfloor \frac{l_k}{q_1 q_2} \right\rceil \nonumber
	+ \dots 
	\pm \sum_{\substack{{q_1, \dots, q_k \in \mathcal P_k} \\ {q_1 < \dots < q_k}\\ {\prod_{q \in \mathcal P_k} q < p_{k+1}^2} }}   \left\lfloor \frac{l_k}{q_1 \dots q_k} \right\rceil.
\end{align*}
Numerically, we demonstrate in \figref{fig:legendreTruncated} that this alone reduces the ratio
\begin{align*}
	\frac{\tilde \pi_k}{l_k/\log p_{k+1}^2} 
\end{align*}
from approximately $2\e^{-\gamma}\approx 1.12$ to approximately $1.03$. 

Now, the Legendre identity is notoriously known for its untamable error term; the unconstrained Legendre identity contains $2^k$ terms, and since each can be rounded either up or down (neglecting the cases of exact divisibility), we arrive at an upper error bound of $2^k$, which rapidly grows out of control as we increase $k$. The result is that it is problematic to make straight forward use of the Legendre identity to count primes, and a look to modern sieve theory is necessary to find methods that work around this complication \cite{Tao:web:2007}. Nonetheless, the constraint we imposed here also influences the error bound in the Legendre identity; while still overwhelming, it shifts from exponential to polynomial growth. We show numerical evidence of this in \figref{fig:legendreError}. 

Another example of constraint is this: In the sequence $s_k$, the first appearance of a composite divisible by $p_i$, for $i<k$, is in one of the positions $p_i - m$, where $m$ is an element in the residue class of \mbox{$p_{j}^2 \imod{p_i}$}, $j\neq i$. (The first appearance of a composite of $p_k$ is of course in position 1, since the first element in $s_k$ is $p_k^2$.) For the primes $p_1,\dots,p_6$, these are the possible positions of first appearance:
\begin{center}	
\begin{tabular}{cl}
Prime              		& Position of first appearance \\
\hline
$p_1$ 				& 2  \\
$p_2$ 				& 3  \\
$p_3$				& 2, 5  \\
$p_4$ 				& 4, 6, 7  \\
$p_5$ 				& 3, 7, 8, 9, 11  \\
$p_6$ 				& 2, 4, 5, 10, 11, 13 
\vspace{1.5mm}
\end{tabular}
\end{center}
Note that this restrains the sequences $\rho_i$, $1\leq i \leq k$, effectively halving the sample space of values $S(s_k^j,p_k\#)$.
Nonetheless, verified by numerical checks, the statistical properties of the reduced sample space do not change significantly from the original one, which we can understand from the fact that the constraint does not incorporate positional information of the sequence $s_k$.

\subsection{The bias of the error term in the prime number theorem}
A well known observation is that the error term in the prime number theorem starts out negative: 
\begin{align*}
\pi(x) - \li(x)<0.
\end{align*}
As we demonstrate in \figref{fig:pix}, this bias continues for the full stretch of our data set, which stops at $x \approx 7.3 \times 10^{13}$. Nonetheless, as Littlewood proved in 1914 \cite{Littlewood:1914}, if we persist in moving $x$ towards infinity, the error term will eventually shift sign, and it will continue to do so infinitely many times over. When that happens for the first time, however, no one knows, though it has been proved that $1.39822\times 10^{316}$ is an upper bound for the event \cite{Bays2000}. A probabilistic measure of the bias was provided in 1994, when Rubinstein and Sarnak \cite{Rubinstein:1994vp}---assuming the Riemann hypothesis--- calculated the logarithmic density of those $x$ such that $\pi(x) - \li(x)>0$ to be around $0.00000026$.

%
%
%
%
%
%
%
%
%
%
%

So, why this bias? While the phenomenon has been widely studied, the literature provides no clear answer as to why there is a bias in the first place. We will not give a definitive answer here, but what we have seen so far strongly suggests that the bias derives from the underlying sieving process. Remember, in the proof of \thmref{thm:MPNT1}, the best approximation of the probabilistic prime counting function $\tilde \pi_k$---when applying Merten's product theorem---is obtained by
\begin{equation*}
	\tilde \pi_k \sim 2 \e^{-\gamma} \frac{l_k}{\log {p_{k+1}^2} }.
\end{equation*}
This choice, rather than for example
\begin{equation*}
	\tilde \pi_k \sim 2 \e^{-\gamma} \frac{l_k}{\log {p_{k}^2} } \quad \textrm{or} \quad \pi_k 
	\sim 2 \e^{-\gamma} \frac{l_k}{\log \frac{1}{2}(p_{k}^2+p_{k+1}^2) }
\end{equation*}
also reflects the fact that sieving by the $k$ first primes completes sieving up to ${p_{k+1}^2}$. 
Similarly, while the prime number theorem guarantees 
\begin{equation*}
	\pi_k 
	\sim \frac{l_k}{\log {p_{k}^2} } 
	\sim \frac{l_k}{\log \frac{1}{2}(p_{k}^2+p_{k+1}^2) }
	\sim \frac{l_k}{\log {p_{k+1}^2} },
\end{equation*}
the latter statement---that each sieve step $k$ sieves up to ${p_{k+1}^2}$---suggests that of these three estimates for the prime counting function $\pi_k$, the most accurate is
\begin{equation*}
	\pi_k 
	\sim \frac{l_k}{\log {p_{k+1}^2} }.
\end{equation*}

Now, since $\log p_{k}^2 < \log t < \log p_{k+1}^2$ whenever $p_k^2< t < p_{k+1}^2$, we have that 
\begin{equation*}
\frac{l_k}{\log {p_{k+1}^2} } < \int_{p_k^2}^{p_{k+1}^2} \frac{dt}{\log t} < \frac{l_k}{\log {p_{k}^2} }. 
\end{equation*}
And then, summing over all intervals $s_j$, $j\geq1$, we get 
\begin{equation*}
\sum_{j=1}^{k} \frac{l_j}{\log {p_{j+1}^2} }<\li(p_{k+1}^2) < \sum_{j=1}^{k} \frac{l_j}{\log {p_{j}^2} }.
\end{equation*}
Therefore, under the assumption that 
\begin{equation*}
\pi(p_{k+1}^2) \sim \sum_{j=1}^{k} \frac{l_j}{\log {p_{j+1}^2} }
\end{equation*}
is our most accurate estimate of the prime counting function, the reason why $\pi(x) - \li(x)$ is leaning towards negative becomes apparent:
using $\li(p_{k+1}^2)$ to estimate $\pi(p_{k+1}^2)$, 
rather than $\sum_{j=1}^{k} l_j / \log p_{j+1}^2$, gives a negative expectation value for the error term:
%
%
\begin{equation*}
E[\pi(x)-\li(x)]<0.
\end{equation*}
Following this logic, it would be natural to suggest that 
\begin{equation*}
	E[\pi(p_{k+1}^2)-\sum_{j=1}^{k} \frac{l_j}{\log {p_{j+1}^2} }]=0.
\end{equation*}
However, as we see in \figref{fig:pix}, the statistical evidence seems to indicate that the truth lies somewhere in between,  
\begin{align*}
	\sum_{j=1}^{k} \frac{l_j}{\log {p_{j+1}^2} }<E[\pi(p_{k+1}^2)]<\li(p_{k+1}^2).
\end{align*}
To gain a more accurate impression of this observation, we normalize the curves in \figref{fig:pix} by 
\begin{equation*}
	\Delta_k :=  \frac{1}{2}\sum_{j=1}^{k} \left( \frac{l_j}{\log {p_{j}^2} } - \frac{l_j}{\log {p_{j+1}^2} }\right),
\end{equation*}
which places the differences 
\begin{equation*}
	\sum_{j=1}^{k} \left( \frac{l_j}{\log {p_{j+1}^2} } - \li_j \right) \quad \textrm{and} \quad \sum_{j=1}^{k} \left( \frac{l_j}{\log {p_{j}^2} } - \li_j \right)
\end{equation*}
approximately at the constant lines -1 and 1 respectively. The resulting plot is shown in \figref{fig:pixnormalized}. There we observe that on the scale of the data available, $[\pi(p_{k+1}^2)-\li(p_{k+1}^2)]/\Delta_k$ apparently fluctuates fairly stable around a mean value of $-0.60$. Constructing an empirical probability density function, we see in \figref{fig:pixgauss} that across the full interval $x$, the fluctuations closely resembles a Gaussian distribution with mean and standard deviation given by $-0.60$ and $0.12$. 

We will not attempt an explanation to why the bias of the error term lies where it does in these observations, or whether it will continue to do so. However, these directions of thoughts seems to add valuable clues towards a rigorous understanding of the error term.

\subsection{An upper bound for the error term in the prime number theorem}
\label{sec:errorterm}
Despite the fact that the number of primes up to $x$ is deterministic, let us for a moment assume that $\pi(x)$ can be represented by a random variable $\Pi(x)$ with 
mean value $\li(x)$ and variance $\sigma^2(x)$. Then we could interpret the error term $|\pi(x) - \li(x)|$ in terms of the standard deviation
%
\begin{equation*}
\sigma(x) = \sqrt{E\left[(\Pi(x) - \li(x))^2\right]}.
\end{equation*}
In this section we will walk through a strategy for obtaining an upper bound on the error term based on this view. We start with formulating a random model for the number of primes within $s_k$.
%

From \secref{sec:notation}, we know that the number of coprimes to $p_k\#$ within any interval of length $l_k$ takes one of the values $S(s_k^j,p_k\#)$, $0\leq j < p_k\#$, and specifically, we have for $s_k$ that  $\pi_k=S(s_k^0,p_k\#)$. This naturally suggests a random model for $\tilde \pi_k$: 
%
%
%
%
%
%
Let the number of coprimes to $p_k\#$ within any interval of length $l_k$ be described by the random variable $\tilde \Pi_k$, with sample space $S(s_k^j,p_k\#)$, $0\leq j < p_k\#$. Using the approximation 
\begin{align*}
	l_k \cdot \prod_{p \in \mathcal P_k} \left(1-\frac{1}{p}\right) \sim 2\e^{-\gamma}\frac{l_k}{\log p_{k+1}^2}
\end{align*}
(and neglecting the error term) we can write the mean value of $\tilde \Pi_k$ as 
\begin{align*}
	\tilde \mu_k := E[\tilde \Pi_k] = \frac{1}{p_k\#} \sum_{j=0}^{p_k\#-1 }S^j(s_k,p_k\#) =
	2\e^{-\gamma}\frac{l_k}{\log p_{k+1}^2},
\end{align*}
and likewise the variance
\begin{align*}
	\tilde \sigma^2_k  := E[(\tilde \Pi_k - \tilde \mu_k)^2]= \frac{1}{p_k\#} \sum_{j=0}^{p_k\# -1} \left(S^j(s_k,p_k\#) - 2\e^{-\gamma}\frac{l_k}{\log p_{k+1}^2}\right)^2.
\end{align*}
With a slight adjustment to this model---multiplying all elements in the sample space by $\e^{\gamma}/2$---the mean value changes to that of the prime number theorem and we arrive at a  random model for $\pi_k$. 
%
%
%
%
%
Therefore, let the number of primes in $s_k$ be represented by the random variable $\Pi_k$, with mean value
\begin{align*}
	\mu_k = \frac{\e^{\gamma}}{2} \tilde \mu_k = \frac{l_k}{\log p_{k+1}^2},	
\end{align*}
and variance
\begin{align*}
	\sigma^2_k = \frac{\e^{2\gamma}}{4} \tilde \sigma^2_k 
	= \frac{1}{p_k\#} \sum_{j=0}^{p_k\# -1} \left(\frac{\e^{\gamma}}{2}  S^j(s_k,p_k\#) - \frac{l_k}{\log p_{k+1}^2}\right)^2.
\end{align*}

%
%
%
%
%
One thing to note is that the mean value is an explicit function of $l_k$ and $p_{k+1}$, while this is not the case with the variance. Rather than aiming for deriving an exact expression, we go about this by finding another distribution where both mean and variance are known functions, and which variance bounds $\sigma^2_k$. An obvious candidate is the binomial distribution with number of trials given by $l_k$, and probability of success  $1/\log p_{k+1}^2$. Therefore, let $\Pi_{k, \textrm{B}}$ be the binomial random variable,
\begin{align*}
	\Pi_{k, \textrm{B}} \sim 	\textrm{B}\left(l_k, \frac{1}{\log p_{k+1}^2}\right).
\end{align*}
Intuitively, there are two reasons why $\Pi_{k, \textrm{B}}$ should have larger variance than $\Pi_{k}$; one is that the sample space of our $\pi_k$ random model is only a very small fraction of the sample space of the binomial model---$p_k\#$ elements versus $2^{l_k}$; a second is that the $\pi_k$ random model has strict upper and lower bounds, while the binomial model allows $\Pi_{k, \textrm{B}}$ to take any values from $0$ to $l_k$. These reasons are not proof of course, but for now we assume it is true that the variance of $\Pi_{k, \textrm{B}}$ bounds  $\sigma^2_k$. (This seems a fairly simple problem, and we expect theory to exist that applies. It is possible to construct a version of Polya's urn model that works heuristically, but in the end the approach mirrors an inclusion-exclusion process and does not provide proof).

%
%
%
%

Since we are mostly interested in the limit of $\toinf{k}$, where the binomial distribution approaches a Poisson distribution, we can replace the binomial variable $\Pi_{k, \textrm{B}}$ by the Poisson random variable $\Pi_{k, \textrm{Pois}}$, with parameter $l_k/\log p_{k+1}^2$,
\begin{align*}
	\Pi_{k, \textrm{Pois}} \sim 	\textrm{Pois}\left(\frac{l_k}{\log p_{k+1}^2}\right).
\end{align*}
%
The mean and variance of $\Pi_{k, \textrm{Pois}}$ are identical and given by
\begin{align*}
	\mu_{k,{\textrm{Pois}}} = \sigma_{k,{\textrm{Pois}}}^2 = \frac{l_k}{\log p_{k+1}^2},
\end{align*}
and it follows by assumption that the variance of the $\pi_k$ random model satisfies
\begin{align*}
	 \sigma_{k}^2  < \sigma_{k,{\textrm{Pois}}}^2 = \frac{l_k}{\log p_{k+1}^2}.
\end{align*}
%
%
%
%
%

%

%
Now, let us turn to $\pi(x)$. Since $\pi(x)$ can be written as the sum  
\begin{align*}
	\pi(x) = \sum_{j=1}^{k-1} \pi_j + \frac{x-p_k^2}{l_k} \, \pi_k, 
\end{align*}
where $k$ is the integer such that $p_k^2 \leq x < p_{k+1}^2$, 
it makes sense to define a random model for $\pi(x)$ in the same way. Hence, let $\Pi(x)$ be the random variable given by
\begin{align*}
	\Pi(x) := \sum_{j=1}^{k-1} \Pi_j + \frac{x-p_k^2}{l_k} \, \Pi_k. 
\end{align*}
%
%
%
%
%
%
%
%
%
Since $\Pi(x)$ is defined as a sum over random independent variables, its mean and variance are obtained simply by summing over the individual means and variances:
\begin{align*}
	\mu(x) := E[\Pi(x)] 	&= \sum_{j=1}^{k-1} E[\Pi_j] + \frac{x-p_k^2}{l_k} \, E[\Pi_k] \\
					&= \sum_{j=1}^{k-1} \frac{l_j}{\log p_{j+1}^2} + \frac{x-p_k^2}{l_k} \, \frac{l_k}{\log p_{k+1}^2}\\
					&\approx \li(x),
\end{align*}
and
\begin{align*}
	\sigma^2(x)  := 	E[\left(\Pi(x) - \mu(x)\right)^2] 
	&= \sum_{j=1}^{k-1} E[(\Pi_j - \mu_j)^2]  + \frac{x-p_k^2}{l_k} \, E[(\Pi_k - \mu_k)^2] \\\\
	&< \sum_{j=1}^{k-1} \sigma_{j,\textrm{Pois}}^2 + \frac{x-p_k^2}{l_k} \, \sigma_{k,\textrm{Pois}}^2 \\
	&= \sum_{j=1}^{k-1} \frac{l_j}{\log p_{j+1}^2} + \frac{x-p_k^2}{l_k} \, \frac{l_k}{\log p_{k+1}^2}\\
	&\approx \li(x).
\end{align*}
Consequently, the standard deviation in the $\pi(x)$ random model satisfies
\begin{align*} 	
	\sigma(x)  = 	\sqrt{E[\left(\Pi(x) - \mu(x)\right)^2]} < \sqrt{\li(x)}.
\end{align*}

Of course, the most important question remains: Does the properties of the $\pi(x)$ random model translate to $\pi(x)$ proper? For the answer to be positive, there are essentially two requirements that must be fulfilled. Firstly, we must be able to interpret the number of primes $\pi_k$ in each interval $s_k$ as a random variable with well defined mean and variance. 
Secondly, we must be able to view $\pi(x)$ as a random variable, with mean and variance derived by summing over the means and variances of the individual intervals $s_k$. 

We have already seen that the first requirement is satisfied; $\pi_k$ is always in the sample space of $\tilde \Pi_k$ and can be viewed as drawn from this set (or alternatively using the  
$\pi_k$ model as we do above; the models differ only by a multiplicative constant). Regarding the second requirement; the distribution of prime numbers is deterministic, and therefore the assumption of independent random variables $\pi_k$ does not hold. While this does not affect constructing the mean of $\pi(x)$ as a sum of the individual means, we need to be more careful about the variance. Fortunately, it is enough to show that if the $\pi_k$s can be considered to be uncorrelated, i.e.,
\begin{align*}
	\rho_{ij} := \frac{\textrm{E}[(\pi_i - l_i/\log p_{i+1})(\pi_j - l_j/\log p_{j+1})]}
			{\textrm{E}[(\pi_i - l_i/\log p_{i+1})^2]}
			=0,
			\quad i \neq j,
\end{align*}
that---according to the Bienaym\'e formula---will be enough for us to sum variances linearly. While we do not rigorously prove uncorrelation, we proceed with a few heuristic arguments for why it very likely holds.

It will be helpful to start by considering how the values $\pi_k$ and $\pi_{k+1}$ of two neighboring intervals $s_k$ and $s_{k+1}$ relate when $k$ is large. The first thing to note is that we can safely assume that the lengths of the intervals, $l_k$ and $l_{k+1}$, are much smaller than the period $p_k\#$ of $R_k(n)$, since the latter grows much faster with $k$. Now, suppose $l_k=l_{k+1}$  and that both $s_k$ and $s_{k+1}$ are sieved by the $k$ first primes (of course, none of which are true). Then we would have 
\begin{align*}
	\pi_k = S(s_k^0,p_k\#) \quad \textrm{and} \quad 	\pi_{k+1} = S(s_k^{l_k},p_k\#).
\end{align*}
Under these assumptions we see that $\pi_k$ and $\pi_{k+1}$ each correspond to different elements of the full sample space of $\tilde \Pi_k$. What's more, the two elements are derived by counting the coprimes to $p_k\#$ within two non-overlapping subintervals of the much longer periodic sequence $R_k(n)$. We would therefore expect the correlation between $\pi_k$ and $\pi_{k+1}$ to be weak already under these considerations; possibly could this be proved in terms of the autocorrelation of $R_k(n)$, which we would expect to drop off to zero at a distance of $l_k$. Naturally, $l_k$ and $l_{k+1}$ are never equal, and often differ significantly because of the distribution of gaps $g_k$. In addition, $s_{k+1}$ is sieved by the additional prime $p_{k+1}$. Both of these factors serve to decrease correlation, and in the case when we look at intervals further apart, the effects are magnified. Heuristically therefore, we expect the $\pi_k$s to behave as uncorrelated variables and we can apply the Bienaym\'e formula. 

Taking what we have seen to its conclusion, we conjecture an upper bound for the error term in the prime number theorem:

\begin{conjecture} As $x\rightarrow \infty$, the error term in the prime number theorem satisfies
\begin{align*}
|\pi(x) - \li(x)| < \sqrt{\li(x)}.
\end{align*}
\end{conjecture}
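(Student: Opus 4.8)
The plan is to upgrade the probabilistic reasoning of this section into a rigorous argument in four stages: (i) make the random model for $\pi(x)$ precise; (ii) prove the variance domination $\sigma_k^2 < l_k/\log p_{k+1}^2$ that is currently only assumed; (iii) replace the heuristic for uncorrelation with a genuine proof, so that the Bienaym\'{e} formula applies and $\sigma^2(x) < \li(x)$ holds for the model; and (iv) transfer the resulting second-moment bound on the model to a pointwise bound on the deterministic quantity $|\pi(x)-\li(x)|$. Stages (i)--(iii) live entirely inside the combinatorics of the periodic sequence $R_k(n)$ and appear tractable; stage (iv) is where the genuine arithmetic must enter, and it is the main obstacle.

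For stage (ii) I would compute the exact second moment of the coprime count $S(s_k^j,p_k\#)$ as $j$ ranges over a full period. Its mean over $j$ is $\tilde\mu_k = l_k\prod_{p\in\mathcal P_k}(1-1/p)$, and its variance is governed by the autocorrelation $C_k(h):=\frac{1}{p_k\#}\sum_{n}\mathbf{1}[R_k(n)=1]\,\mathbf{1}[R_k(n+h)=1]$, which by the Chinese Remainder Theorem factors over the primes $p\in\mathcal P_k$ and can be written in closed form. Comparing this exact variance (after the rescaling by $\e^{\gamma}/2$ that produces $\Pi_k$) against the Poisson value $l_k/\log p_{k+1}^2$ should confirm the domination; the intuition in the text---that the sample space of $\Pi_k$ is bounded and has only $p_k\#$ points---becomes a precise statement about the negative correlation that sieving induces. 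For stage (iii) I would show that $C_k(h)$ decays to its mean-square value $\big(\prod_{p\in\mathcal P_k}(1-1/p)\big)^2$ once the lag exceeds $l_k$, so that two disjoint windows contribute a cross-covariance that vanishes as $\toinf{k}$; for $s_k$ versus $s_{k+1}$ the additional sieving by $p_{k+1}$ and the generic inequality $l_k\neq l_{k+1}$ only reinforce this. That is exactly the $\rho_{ij}=0$ needed to sum variances linearly and obtain $\sigma^2(x)<\li(x)$ for the model unconditionally.

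The hard part is stage (iv). A bound on $\sigma(x)=\sqrt{E[(\Pi(x)-\mu(x))^2]}$ controls only a second moment of the random model, whereas the conjecture asserts a pointwise inequality for the single deterministic value $\pi(x)=S(s_k^0,p_k\#)$. Chebyshev's inequality applied to $\Pi(x)$ yields at best an almost-all statement and cannot by itself certify that the particular sample $j=0$ lies within one standard deviation of its mean---indeed Maier's theorem shows the local prime density genuinely fluctuates. To close the gap one must argue that $\pi(x)$ behaves as a typical realization of the model uniformly in $x$, and this is precisely where the zeros of $\zeta(s)$ must be fed in: the explicit formula expresses the true size of $\pi(x)-\li(x)$ through the oscillatory sum $\sum_\rho x^\rho/\rho$, so proving the conjecture is essentially equivalent to bounding that sum by $\sqrt{\li(x)}\sim\sqrt{x/\log x}$. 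I expect this step to be out of reach of the probabilistic model alone; it is, however, consistent with Littlewood's $\Omega$-result and with the conjectured extremal size $\sqrt{x}\,(\log\log\log x)/\log x=o(\sqrt{\li(x)})$, so the most honest outcome of the plan is a conditional one---stages (i)--(iii) deliver the model bound unconditionally, and stage (iv) would follow from, and is essentially equivalent to, a sufficiently strong bound on the contribution of the nontrivial zeros.
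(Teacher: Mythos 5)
Your stages (i)--(iii) are, almost point for point, the program the paper itself lays out in Section~\ref{sec:errorterm}: the same random model $\Pi_k$ built on the sample space $S(s_k^j,p_k\#)$, $0\leq j<p_k\#$, the same binomial-to-Poisson domination $\sigma_k^2<l_k/\log p_{k+1}^2$, and even the same suggestion that uncorrelation be attacked through the autocorrelation of the periodic sequence $R_k(n)$ (the paper explicitly floats this, and your CRT factorization of $C_k(h)$ is the natural way to make it concrete; compare also Montgomery--Vaughan's work on reduced residues in short intervals, which makes your stage~(ii) genuinely plausible). So on the model side you have not diverged from the paper --- you have sharpened it.

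The substantive difference is your stage (iv), and there you are correct where the paper is not. The paper asserts that only two obstacles remain --- proving uncorrelation and proving $\sigma_k^2<\sigma_{k,\textrm{Pois}}^2$ --- and that neither should have ``strong barriers.'' This omits exactly the step you isolate: a bound on $\sigma^2(x)=E[(\Pi(x)-\mu(x))^2]$ controls an ensemble second moment over all $p_k\#$ shifts, while the conjecture is a pointwise claim about the single realization $j=0$. Chebyshev yields only that \emph{most} shifts lie within a few standard deviations, and nothing in the model distinguishes the arithmetic window $s_k^0$ from an atypical one; Maier's theorem, discussed in the paper itself, is a standing warning that the true primes can systematically escape such probabilistic predictions. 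Your observation that closing this gap amounts to bounding $\sum_\rho x^\rho/\rho$ is the right diagnosis, and it is worth noting you could press it further: $\sqrt{\li(x)}\sim\sqrt{x}/\sqrt{\log x}$ is \emph{smaller} than the classical error $O(\sqrt{x}\log x)$ that the Riemann hypothesis delivers, so the conjecture as stated is strictly stronger than RH, though consistent with Montgomery's conjectured extremal order $\sqrt{x}\,(\log\log\log x)^2/\log x$ (you wrote a single $\log\log\log x$; Montgomery's conjecture squares it, but both are $o(\sqrt{\li(x)})$, so your consistency check stands). In short: your proposal honestly concludes that stages (i)--(iii) give an unconditional statement about the model only, and that is the accurate assessment --- the paper's own account of what remains to be proved understates the difficulty by precisely your stage (iv).
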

\noindent From the above---and from what we understand---it is also clear that the two remaining obstacles for a rigorous proof of the conjecture is 1) to prove that the $\pi_k$s
can be considered uncorrelated; and 2) to prove that $\sigma^2_k<\sigma^2_{k,\textrm{Pois}}$. We expect neither of these proofs to have strong barriers, nor that any surprises should arise.
\subsubsection{Empirical evidence}
As we have just seen, underlying the suggested conjecture is the interpretation that the $\pi_k$s behave as random, uncorrelated variables, with well defined means and variances. And additionally, that we can employ the variance of a binomial distribution to bound the variance of $\pi_k$. To round off the paper, we here present empirical results supporting this picture, starting plainly with the set of $\pi_k$s.


 From the prime number theorem we know that 
\begin{align*}
	\pi_k  \sim \frac{l_k}{\log p_{k+1}^2} = \frac{2p_{k+1}g_k - g_k^2}{\log p_{k+1}^2}.
\end{align*}
Therefore, by substituting $x=p_{k+1}^2$, we would expect all values of $\pi_k$ to lie close to one of the curves 
\begin{align*}
	\pi(x,g) := \frac{2 \sqrt{x}g-g^2}{\log x}, \quad g=2,4,\dots,
\end{align*}
which is demonstrated to hold in \figref{fig:pik}. Now, to gain an impression of how the $\pi_k$s fluctuate around each curve, 
we plot all values $\pi_k - \li_k$. At first eyesight then, \figref{fig:diffpiklik} reveals that these values are seemingly evenly distributed around 0, confirming to the behavior of a random variable. By sampling across values of $\pi_k - \li_k$ we arrive at a set of empirical probability density functions and find that the fluctuations are well described by Gaussian distributions, both for samples taken across $\pi_k$s corresponding to different curves $\pi(x,g)$ and for samples taken along  the curve $\pi(x,6)$ across increasing $k$; all shown in \figref{fig:distributionsdiffpiklik}.

We continue with the correlation between the number of primes $\pi_{k}$ and $\pi_{k+j}$ in intervals that differ by $j$ sieve steps. In \figref{fig:correlationsdiffpiklik}A we plot the empirical correlation as calculated from 
\begin{align*}
	\rho_{ij} := \frac{\textrm{E}[(\pi_i - l_i/\log p_{i+1})(\pi_j - l_j/\log p_{j+1})]}
			{\textrm{E}[(\pi_i - l_i/\log p_{i+1})^2]}
			=0,
			\quad i \neq j,
\end{align*}
with the average taken over all values $1 \leq k \leq p_{k+1}^2$. What we observe is that for $j>50$, the correlation fluctuates evenly around 0. However, for smaller values of $j$, and in particular for $j=1$---corresponding to neighboring intervals---we note a slight anti-correlation. This clearly expresses itself in \figref{fig:correlationsdiffpiklik}B, where the correlation is calculated for intervals of $10000$ non-overlapping samples of $k$. Here we see that for increasing interval number and hence larger $k$, there is a visible trend towards stronger anti-correlation of close neighbors. We do not currently have an explanation for this observation. 
Nonetheless, if the effect of this behavior is significant, it will in fact reduce the variance of $\pi(x)$, since anti-correlated variables contribute negatively to the total variance, as seen from the identity
\begin{align*}
	\sigma^2(p_{k+1}^2)
	= \sum_{i=1}^k \sum_{j=1}^k \sigma_i \sigma_j
	= \sum_{i=1}^k \sigma_i^2 + 2\sum_{1\le i}\sum_{<j\le k} \sigma_i \sigma_j. 
\end{align*}

Consider now the assumption that the variance of the binomial distribution $\textrm{B}(l_k,l_k/\log p{k+1}^2)$ can bound the variance of $\pi_k$. We describe this assumption by three different examples, on somewhat different length scales. To start with the smallest scale, we view in \figref{fig:counterPlots} how the standard deviation of the binomial distribution compares to $\pi_k(x)-\li_k(x)$ across the three intervals $s_{500}$, $s_{750}$, and $s_{1000}$. While the fluctuations of $\pi_k(x)-\li_k(x)$ are large, the steady growth of the binomial standard deviation ensures it ends up with the highest value at $x=p_{k+1}^2$. 

Moving up in scale, we now take the point of view of probability density functions (PDF). For this purpose, 
%
%
%
%
we have simulated the $\pi_k$ random model for the interval $s_{50}$, and plotted the measured PDF of $\Pi_{50}-\li_{50}$ together with the theoretical binomial PDF for that interval (\figref{fig:distbin}A). In parallel, we have sampled values of $\pi_k-\li_k$ for intervals $s_k$ with $g_k=6$. The resulting PDF is shown in \figref{fig:distbin}B together with the two binomial  PDFs corresponding to the start and end of the sample interval. In both the simulated and real case, we see that the binomial distribution has the widest PDF, and hence greatest variance.

Finally, we compare standard deviations of binomial distributions with standard deviations calculated from sampled data,
across all intervals $s_k$, $1\leq k \leq 575200$.
This is shown in \figref{fig:sampledstdev} for those $s_k$ such that $g_k=6$. Not only are the standard deviations of the binomial distributions always larger than those of the $\pi_k$s, but the gap increases as $\toinf{k}$.

What we find then is convincing numerical evidence of our claim that the $\pi_k$s can be interpreted as a set of random, uncorrelated variables. This implies, of course, that we should expect to observe empirically that $|\pi(x) - \li(x)|<\sqrt{\li(x)}$ as $\toinf{x}$. This is apparent in \figref{fig:pix}, where $\pi(x) - \li(x)$ wiggles towards infinity, well within the range of the conjectured error bound.

\bibliography{primereferences}
\bibliographystyle{plain}
\clearpage

\appendix

\section*{Appendix: Figures}
\begin{figure}[h]
\centering
\includegraphics[width=\textwidth]{./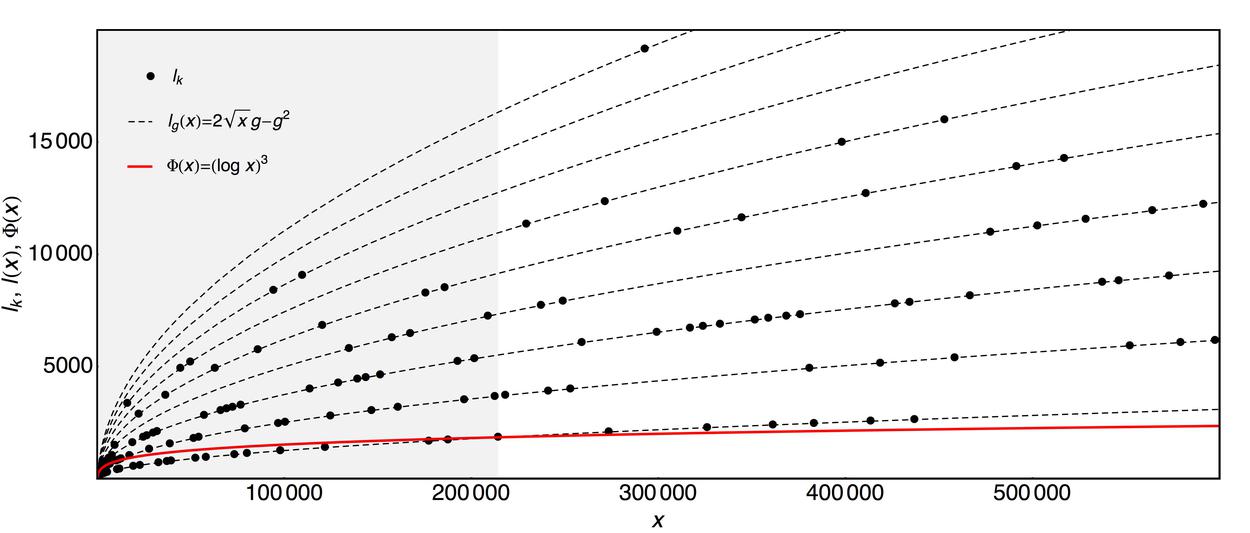}
\caption{
\small \sl
The lengths $l_k=2p_{k+1}g_k-g_k^2$ of the intervals $s_k$ plotted at the values $x = p_{k+1}^2$, $1\leq k \leq 136$ (black circles). All points lie on the curves $l(x)=2 \sqrt{x}g-g^2$ (dashed lines), where $g=2,4,\dots$. The points on the lowest line all correspond to values of $k$ such that $g_k = 2$; the points on the second lowest to those $k$ such that $g_k = 4$, and so on. In addition, $\Phi(x) = (\log x)^{3}$---the length of Maier's interval with $\lambda=3$---is plotted as a continuous curve (red line). In the shaded region $\Phi(x)$ is large enough to cover some intervals $s_k$ completely, while beyond this region, $\Phi(x)$ is strictly smaller than any $l_k$. 
}
\label{fig:interval_lengths}
\end{figure}

\begin{figure}
        \centering
        \begin{subfigure}[b]{\textwidth}
                \centering
		\includegraphics[width=\textwidth]{./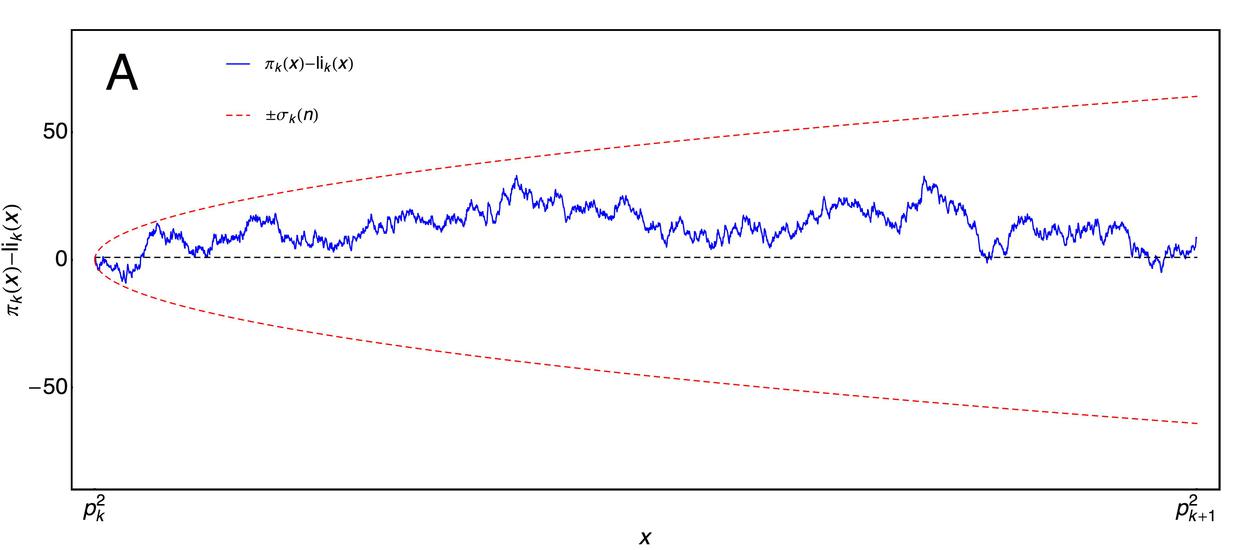}
        \end{subfigure}\\%
        \begin{subfigure}[b]{\textwidth}
                \centering
		\includegraphics[width=\textwidth]{./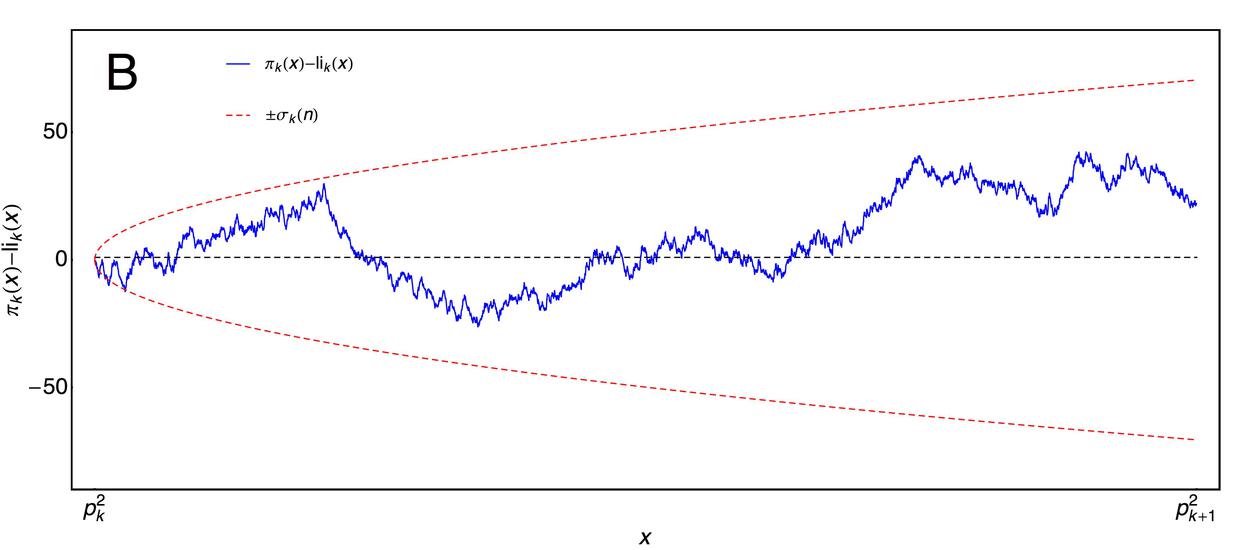}
        \end{subfigure}\\
        \begin{subfigure}[b]{\textwidth}
                \centering
		\includegraphics[width=\textwidth]{./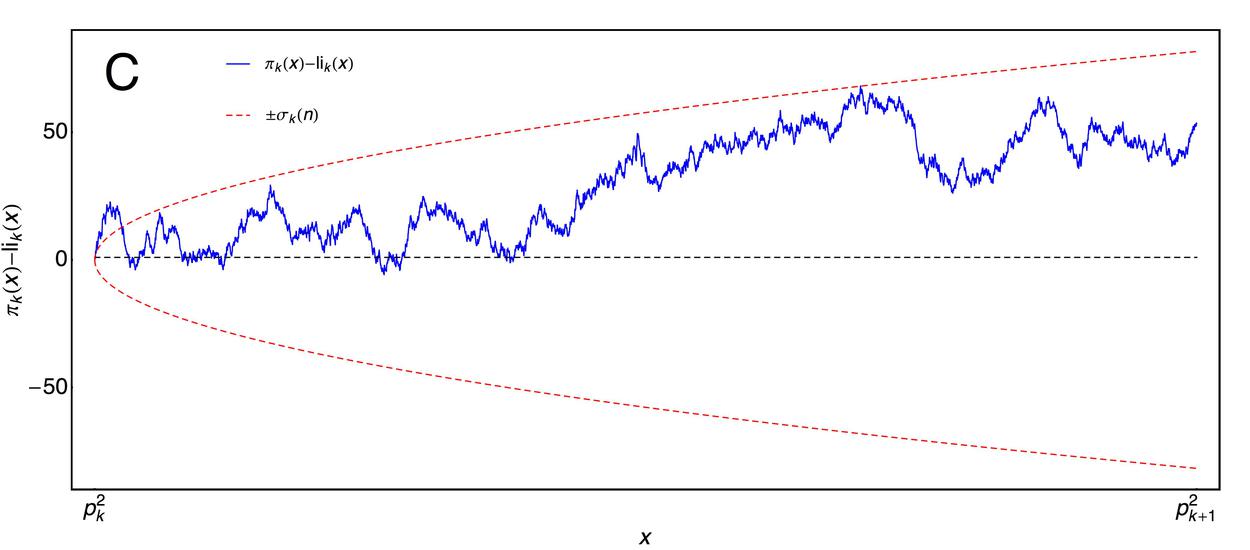}
        \end{subfigure}
\caption{
\small \sl
$\pi_k(x) - li_k(x)$ (blue) plotted across the interval $s_k$ for three values of $k$: A) $k=500$, B)  $k=750$, and C)  $k=1000$. Also shown is the standard deviation $\sigma_k(n)$ (red, dashed) of a binomial distribution with probability of success $p=1/\log {p_{k+1}^2}$, and sequence length $n$, where $1\leq n \leq l_k$. The different intervals have lengths $l_{500}=71250$, $l_{750}=91152$, and $l_{1000}=126768$. The intervals correspond to the gaps $g_{500}=10$, $g_{750}=8$, and $g_{1000}=8$, where $g_k:=p_{k+1}-p_{k}$.
}
\label{fig:counterPlots}
\end{figure}

\begin{figure}
        \centering
        \begin{subfigure}[b]{\textwidth}
                \centering
		\includegraphics[width=\textwidth]{./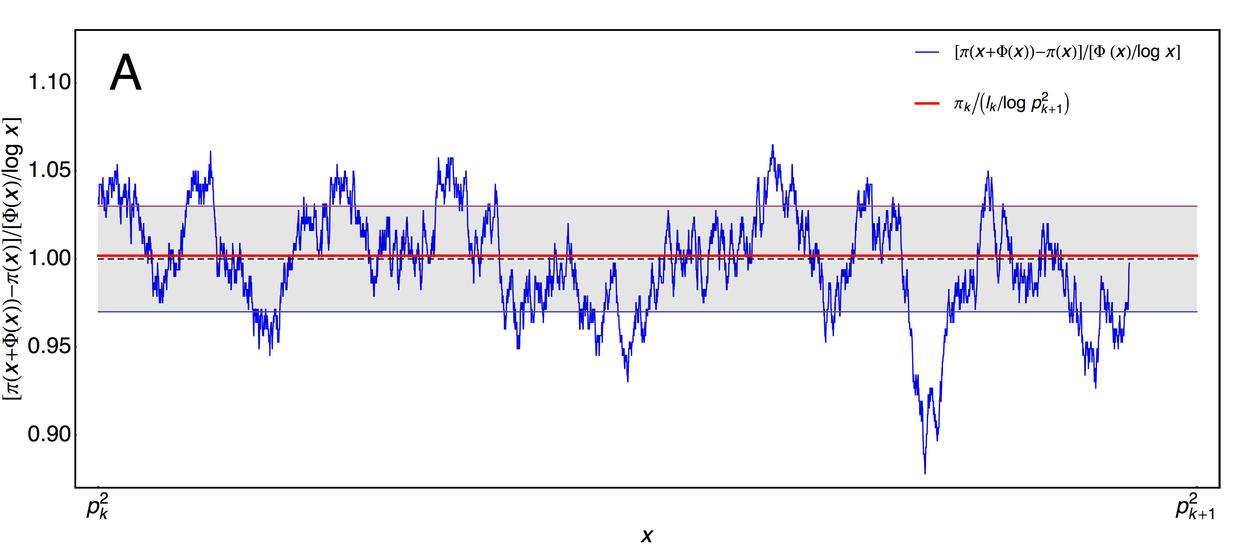}
        \end{subfigure}\\%
        \begin{subfigure}[b]{\textwidth}
                \centering
		\includegraphics[width=\textwidth]{./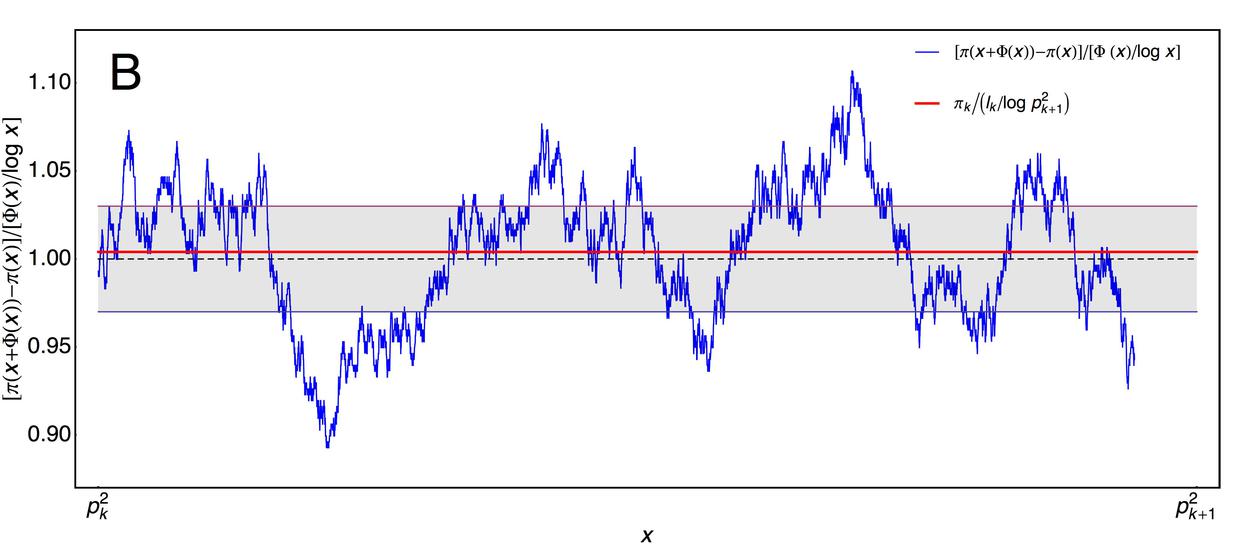}
        \end{subfigure}\\
        \begin{subfigure}[b]{\textwidth}
                \centering
		\includegraphics[width=\textwidth]{./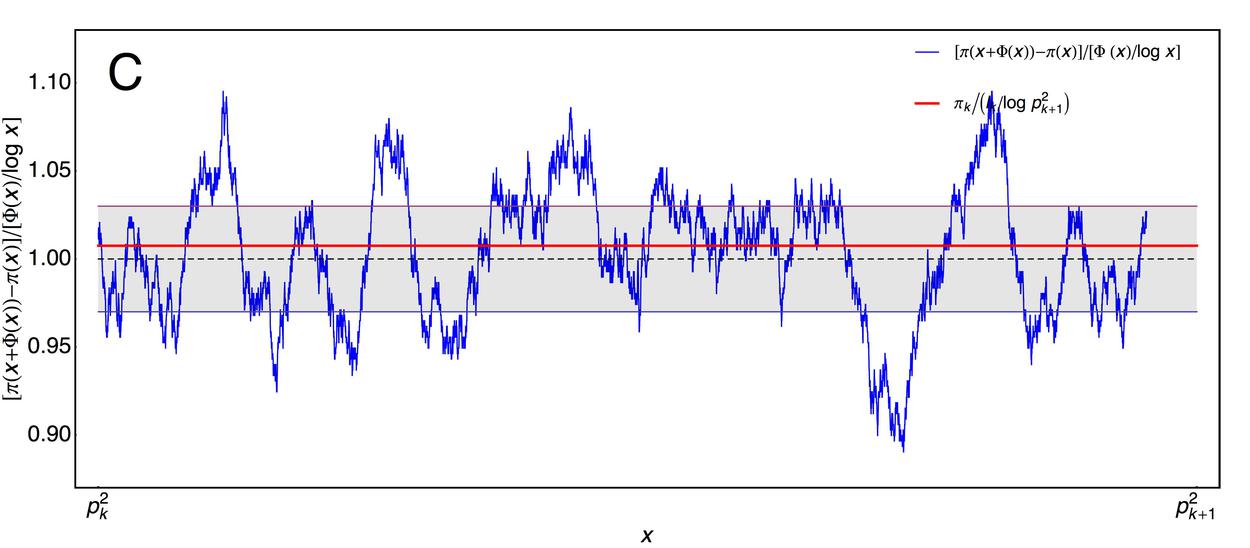}
        \end{subfigure}
\caption{
\small \sl
The prime gaps $g_i := p_{i+1}-p_{i}$ within $s_k$, plotted at $p_i$ for all $i$ such that $p_k^2\leq p_i<p_{i+1}<p_{k+1}^2$ (gray). Also shown are the measured average gap length (red) and the expected gap length in the interval given by the prime number theorem (blue, dashed), in addition to a moving average taken over runs of 25 gap lengths (black). Each plot corresponds to a specific value value of $k$: A) $k=500$, B)  $k=750$, and C)  $k=1000$. 
}
\label{fig:gapsPlots}
\end{figure}

\begin{figure}
        \centering
        \begin{subfigure}[b]{\textwidth}
                \centering
		\includegraphics[width=\textwidth]{./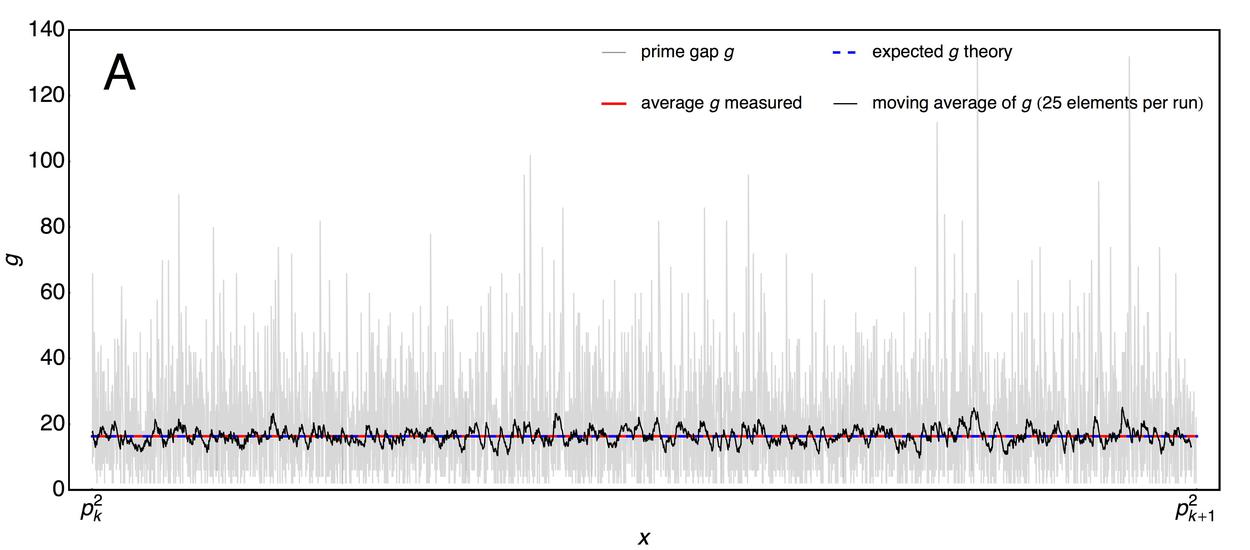}
        \end{subfigure}\\%
        \begin{subfigure}[b]{\textwidth}
                \centering
		\includegraphics[width=\textwidth]{./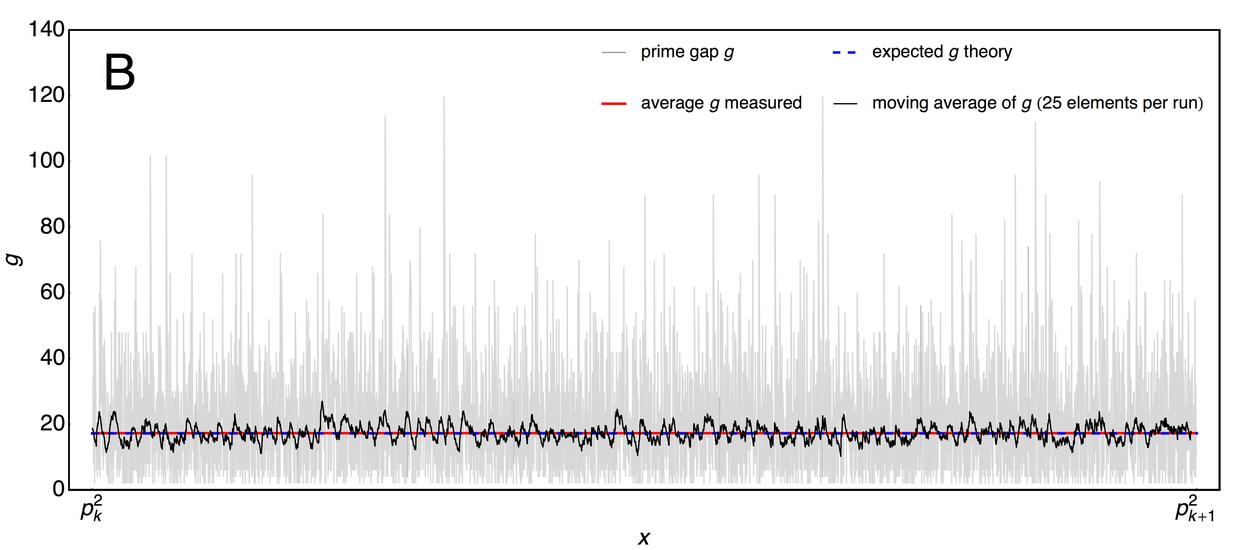}
        \end{subfigure}\\
        \begin{subfigure}[b]{\textwidth}
                \centering
		\includegraphics[width=\textwidth]{./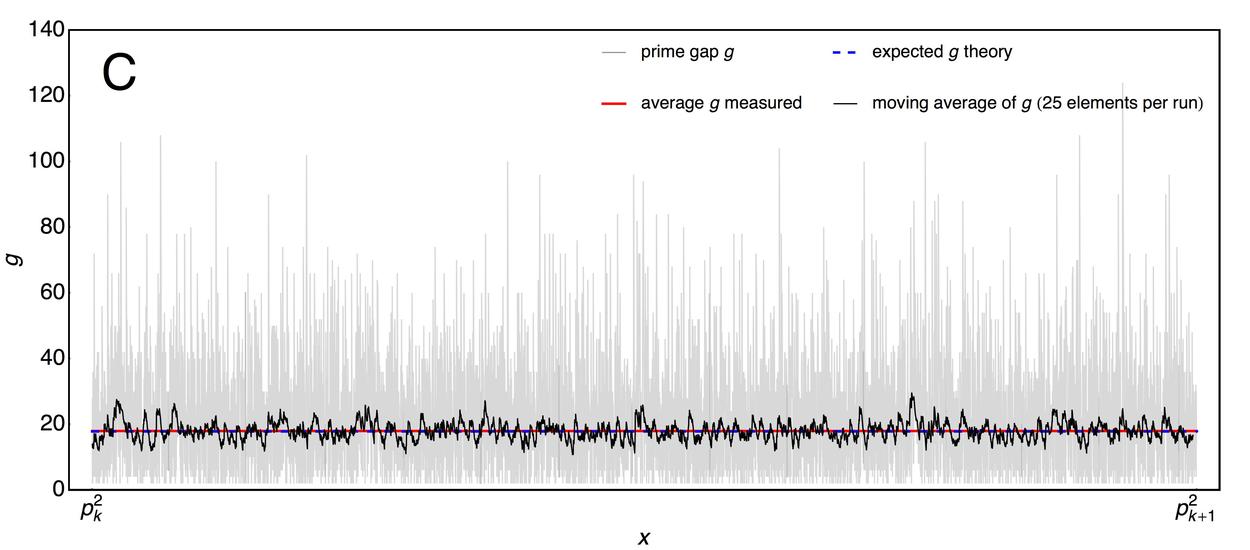}
        \end{subfigure}
\caption{
\small \sl
The ratio between the actual number of primes within a subinterval $[x, x+\Phi(x)]$ of $s_k$ and the estimate suggested by the prime number theorem, plotted for different values of $x$ (blue, rugged lines). The lengths of the subintervals are given by $\Phi(x) = (\log x)^{3}$. The ratio values are plotted at $x$, with $p_k^2 \leq x < p_{k+1}^2-\Phi(x)$, which explains the last empty stretch in each plot. All ratio values climbing above the shaded area correspond to intervals where the density is 1.003 times higher than that predicted by the prime number theorem. Likewise, the values falling below the shaded area reveal the intervals where the density is 0.97 times lower than what the prime number theorem predicts. 
The red line shows the ratio obtained when using the length $l_k$ of the whole interval $s_k$ rather than $\Phi(x)$. The different plots are for the intervals $s_k$, where A) $k=500$, B)  $k=750$, and C)  $k=1000$. 
}
\label{fig:maierDeltaPlots}
\end{figure}

\begin{figure}[h]
\centering
	\includegraphics[width=\textwidth]{./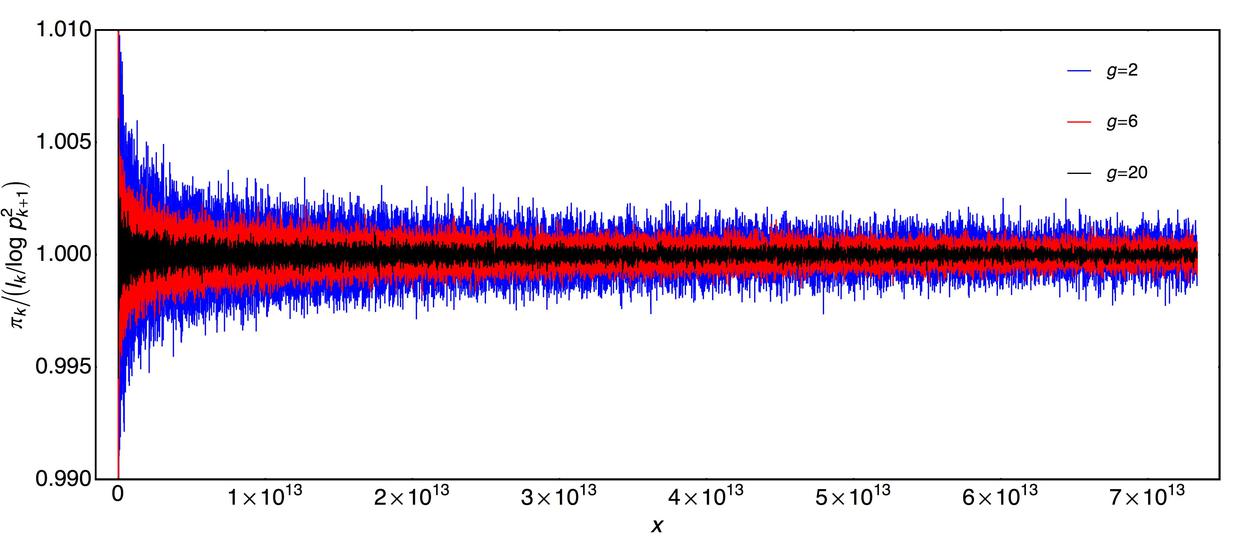}
\caption{
\small \sl
The ratio between the number of primes $\pi_k$ within the interval $s_k$, and  the estimate $l_k/\log p_{k+1}^2$ suggested by the prime number theorem. The different curves 
show the ratio values of the intervals $s_k$ corresponding to the prime gaps $g_k:=p_{k+1}-p_{k} = g$, where $g$ take the values 2 (blue), 6 (red), and 20 (black). $k$ runs from 1 to 575200, but only the values of $k$ corresponding to gaps 2, 6, and 20 are shown.
}
\label{fig:ratioPrimesVsEstimate}
\end{figure}

\begin{figure}
	\centering
	\includegraphics[width=\textwidth]{./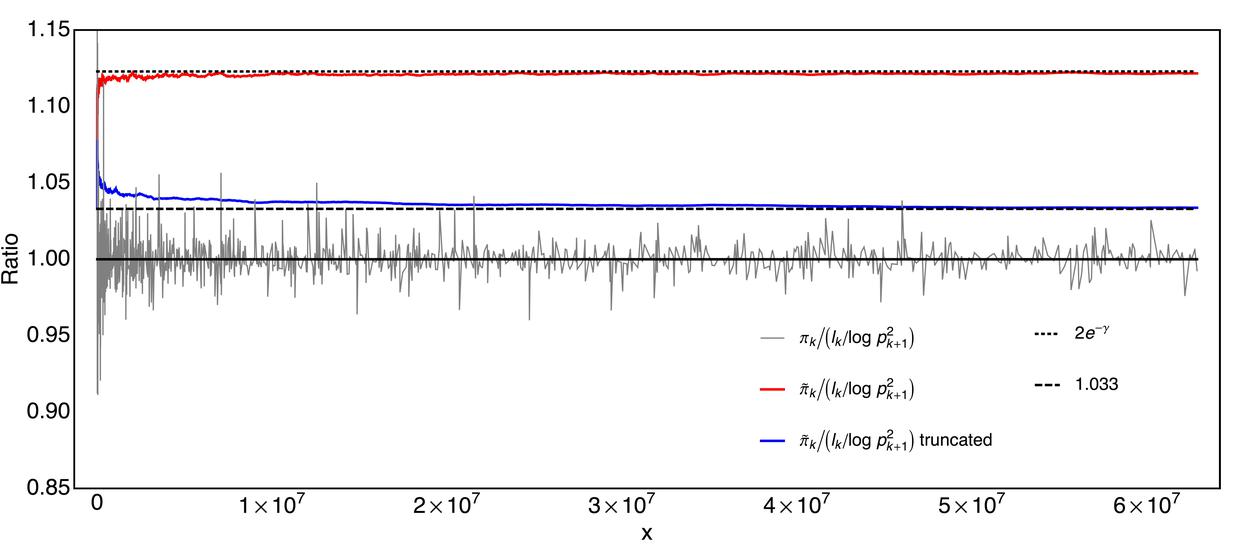}
\caption{
\small \sl
The ratios between the values of three different prime counting functions over the interval $s_k$ and the number of primes in $s_k$ estimated by the prime number theorem, $l_k/\log p_{k+1}^2$. The values are plotted at $x=p_{k+1}^2$, $1\leq k \leq 1000$. The prime counting functions are 1) $\pi_k$, the number of primes in $s_k$ (gray); 2) $\tilde \pi_k$, the expected number of primes based on the Euler product (red); 3) a version of the latter where only denominators below $p_{k+1}^2$ are included in the expansion of the Euler product (blue). The horizontal lines illustrate the limits tended to by the two probabilistic prime counting functions. 
}
\label{fig:legendreTruncated}
\end{figure}

\begin{figure}
	\centering
	\includegraphics[width=\textwidth]{./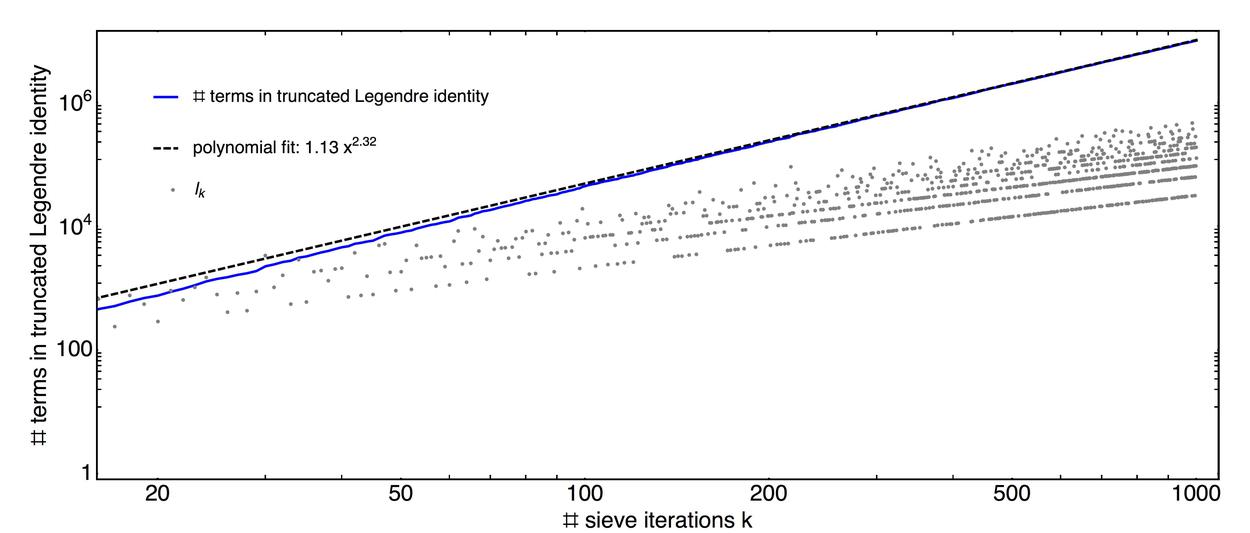}
\caption{
\small \sl
The number of terms remaining in the Legendre identity after truncating every term equal to or larger than $p_{k+1}^2$ plotted as a function of number of sieve steps $k$ (blue). Corresponding to this curve is a best polynomial fit (black, dashed) and for comparison also the interval lengths $l_k$ are shown (dots).
}
\label{fig:legendreError}
\end{figure}

\begin{figure}[H]
\centering
\includegraphics[width=\textwidth]{./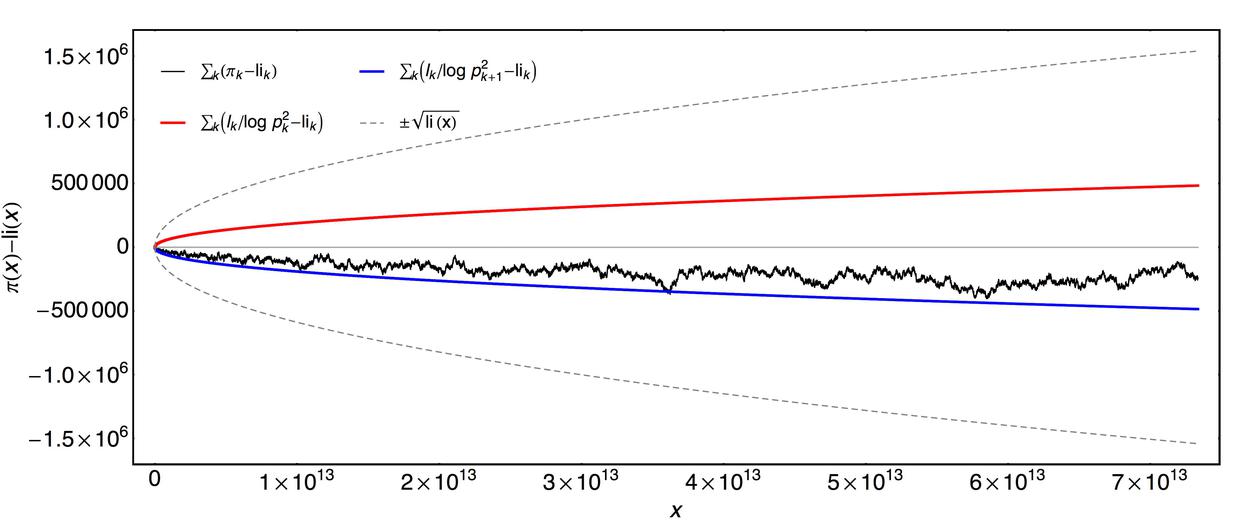}
\caption{
\small \sl
$\pi(x) - \li(x)$ plotted at the values $x = p_{k+1}^2$ for $1\leq k \leq 575200$ (black). Likewise, the differences
$\sum_k (l_k / \log p_k^2 - \li_k)$ (red) and $\sum_k (l_k / \log p_{k+1}^2 - \li_k)$ (blue) are plotted for the same values of $k$.
The gray, dashed lines are the conjectured theoretical bounds for the error term, $\pm \sqrt{\li(x)}$.
}
\label{fig:pix}
\end{figure}

\begin{figure}
\centering
\includegraphics[width=\textwidth]{./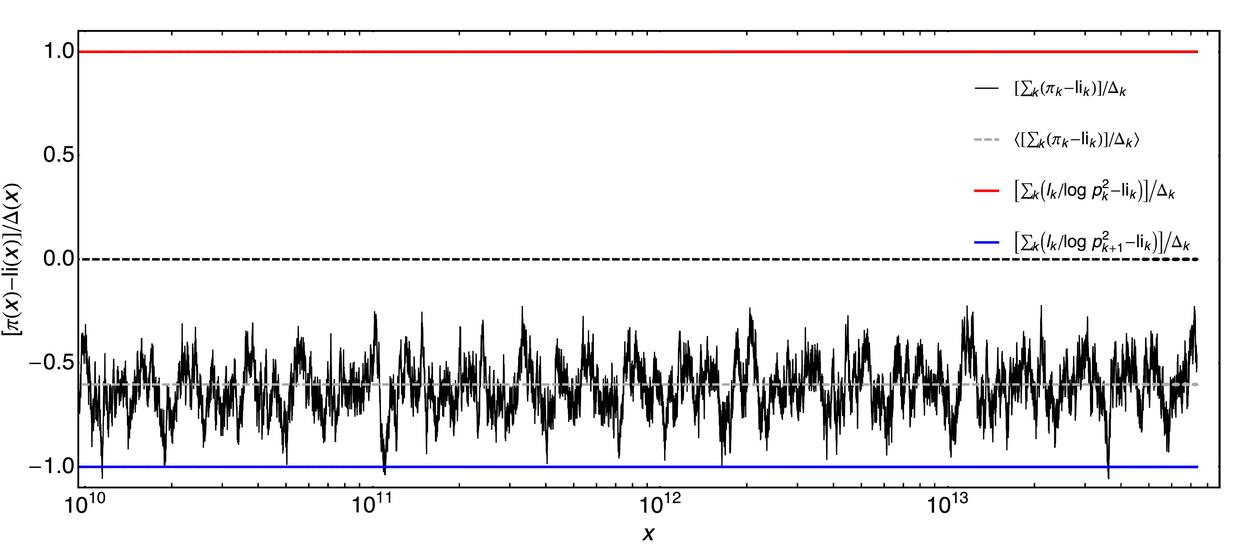}
\caption{
\small \sl
Log-linear plot of $\left[\sum(\pi_k - \li_k)\right]/\Delta_k$, plotted at the values $p_{k+1}^2$ for $1\leq k \leq 575200$ (black). Likewise, the normalized differences 
$\left[\sum_k (l_k / \log p_k^2 - \li_k)\right]/\Delta_k$ (red) and $\left[\sum_k (l_k / \log p_{k+1}^2 - \li_k)\right]/\Delta_k$ (blue) are plotted for the same values of $k$. Also shown is the mean value $\langle \left[\sum(\pi_k - \li_k)\right]/\Delta_k \rangle$ taken across the whole interval (gray, dashed).
}
\label{fig:pixnormalized}
\end{figure}

\begin{figure}
\centering
\includegraphics[width=0.48 \textwidth]{./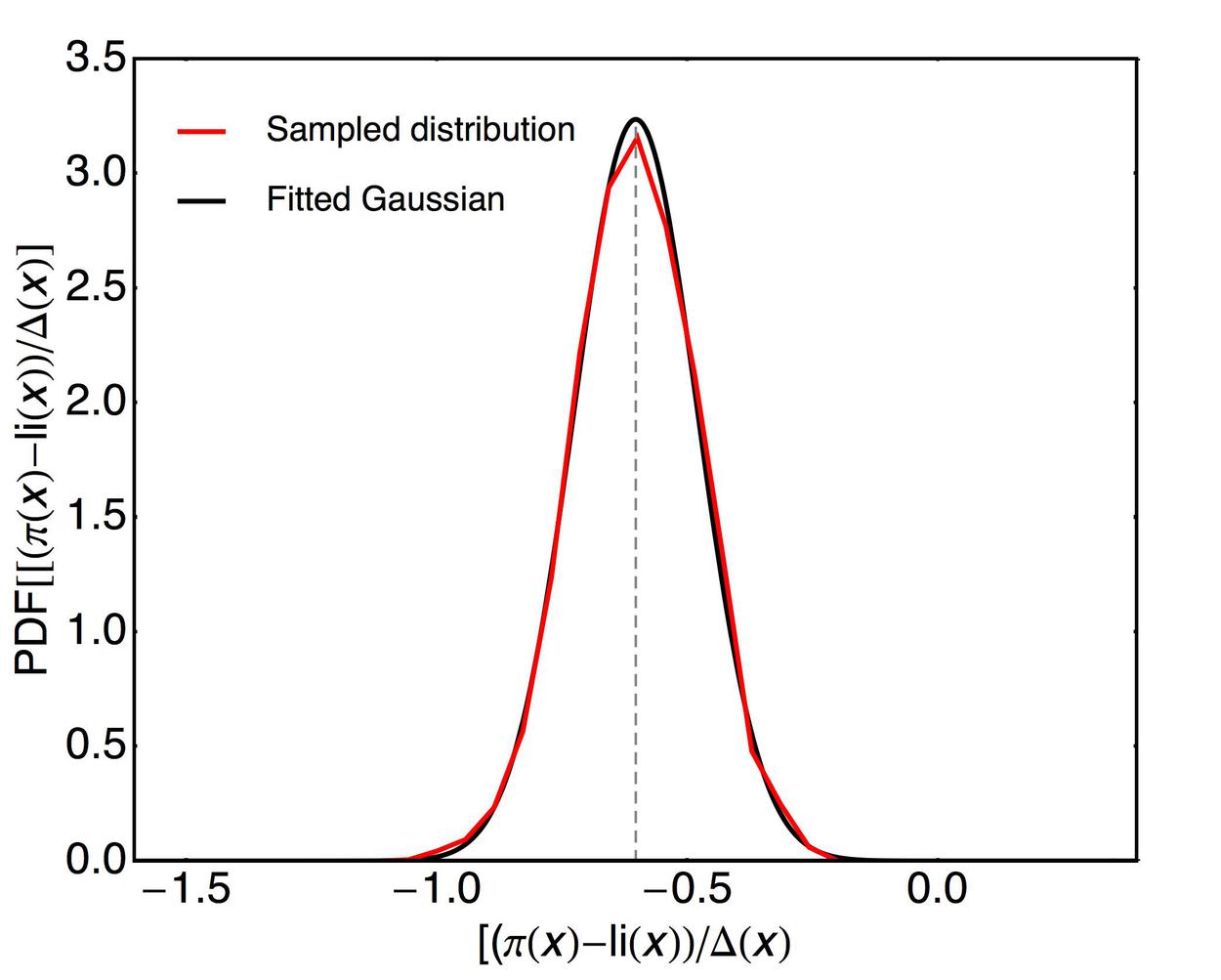}
\caption{
\small \sl
The distribution of the normalized error term $\left[\sum(\pi_k - \li_k)\right]/\Delta_k$ (red), sampled across the values $1\leq k \leq 575200$. The mean and the standard deviation are $\mu= -0.60$, and $\sigma = 0.12$, respectively, both presented with two significant digits. A Gaussian distribution using these parameters is included in the plot (black).
}
\label{fig:pixgauss}
\end{figure}

\begin{figure}
\centering
\includegraphics[width=\textwidth]{./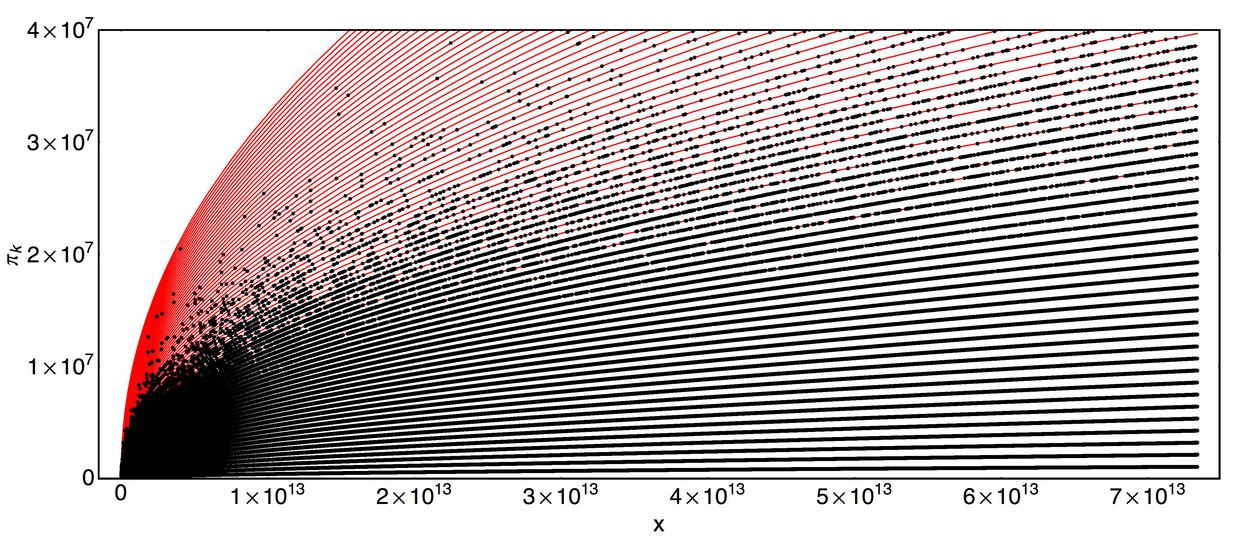}
\caption{
\small \sl
The number of primes $\pi_k$ in each interval $s_k$ plotted at the values $x = p_{k+1}^2$ (black dots). The emerging curves relates to the specific gap values $g_k$; the lower curve corresponds to the set of intervals $s_k$ where $g_k=2$, the second lowest $g_k=4$, etc. The red curves are the theoretical estimates $(2\sqrt{x}g - g)/ \log {x}$, where $g$ takes the gap values $2, 4,\dots$.
}
\label{fig:pik}
\end{figure}

\begin{figure}[H]
	\centering
	\includegraphics[width=\textwidth]{./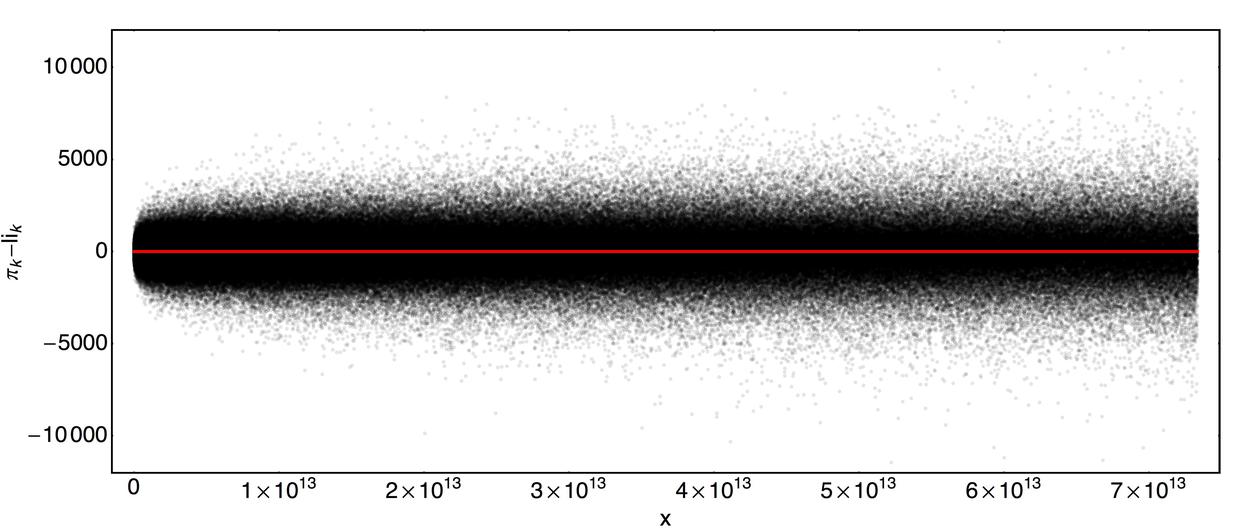}
\caption{
\small \sl
$\pi_k - \li_k$ plotted at the values $x = p_{k+1}^2$, $1\leq k \leq 575200$.
}
\label{fig:diffpiklik}
\end{figure}

\begin{figure}[H]
        \centering
        \begin{subfigure}[b]{0.45\textwidth}
                \centering
                \includegraphics[width=\textwidth]{./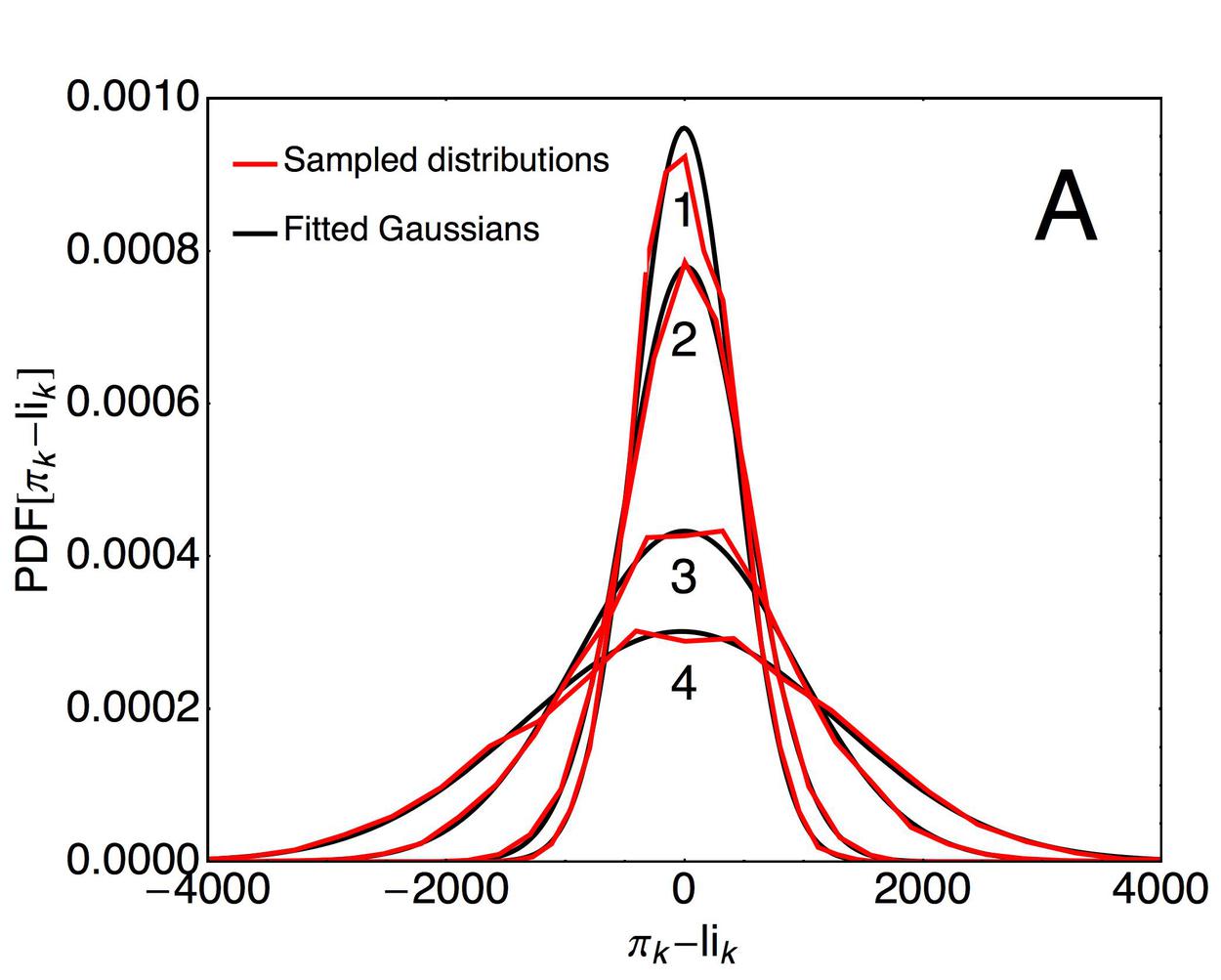}
        \end{subfigure}%
        ~
        \begin{subfigure}[b]{0.45\textwidth}
                \centering
                \includegraphics[width=\textwidth]{./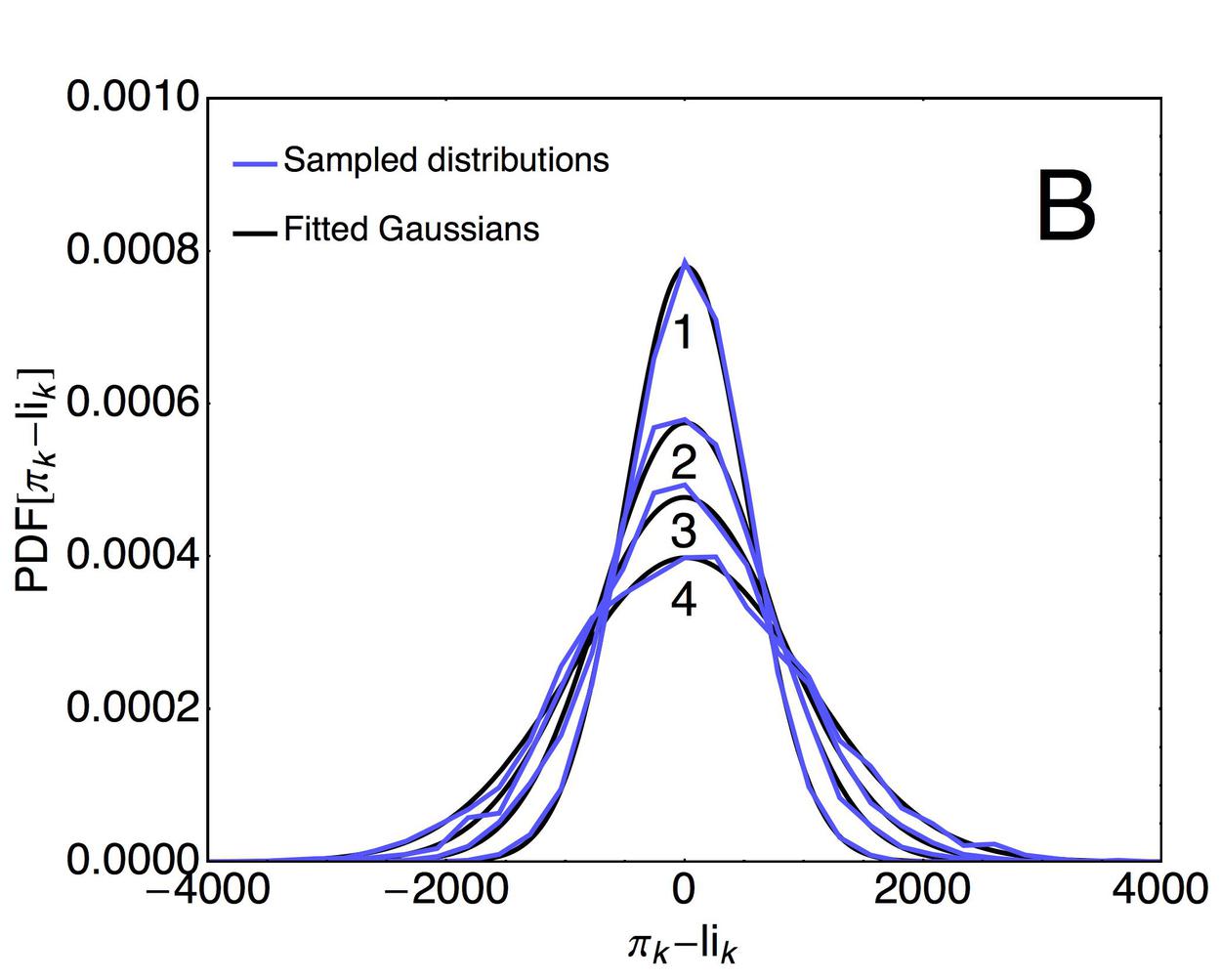}
        \end{subfigure}
\caption{
\small \sl
Empirical and fitted probability distributions for $\pi_k - \li_k$. A) Distributions of $\pi_k - \li_k$ for the sets of intervals $s_k$ where 1) $g_k=2$, 2) $g_k=6$, 3) $g_k=10$, and 4) $g_k=14$. All distributions are build up from sampling $4000$ consecutive values of $\pi_k - \li_k$ for each set of intervals, starting at the $20000$th value in each. B) 
Distributions of $\pi_k - \li_k$ for the set of intervals $s_k$ where $g_k=6$, obtained by sampling 4000 consecutive values starting at value numbers 1) 20000, 2) 40000, 3) 60000, and 4) 80000. In both plots, the black curves are Gaussian distributions with mean and standard deviation identical to the colored curves they fit.
}        
\label{fig:distributionsdiffpiklik}
\end{figure}

\begin{figure}[H]
        \centering
        \begin{subfigure}[b]{0.45\textwidth}
                \centering
                \includegraphics[width=\textwidth]{./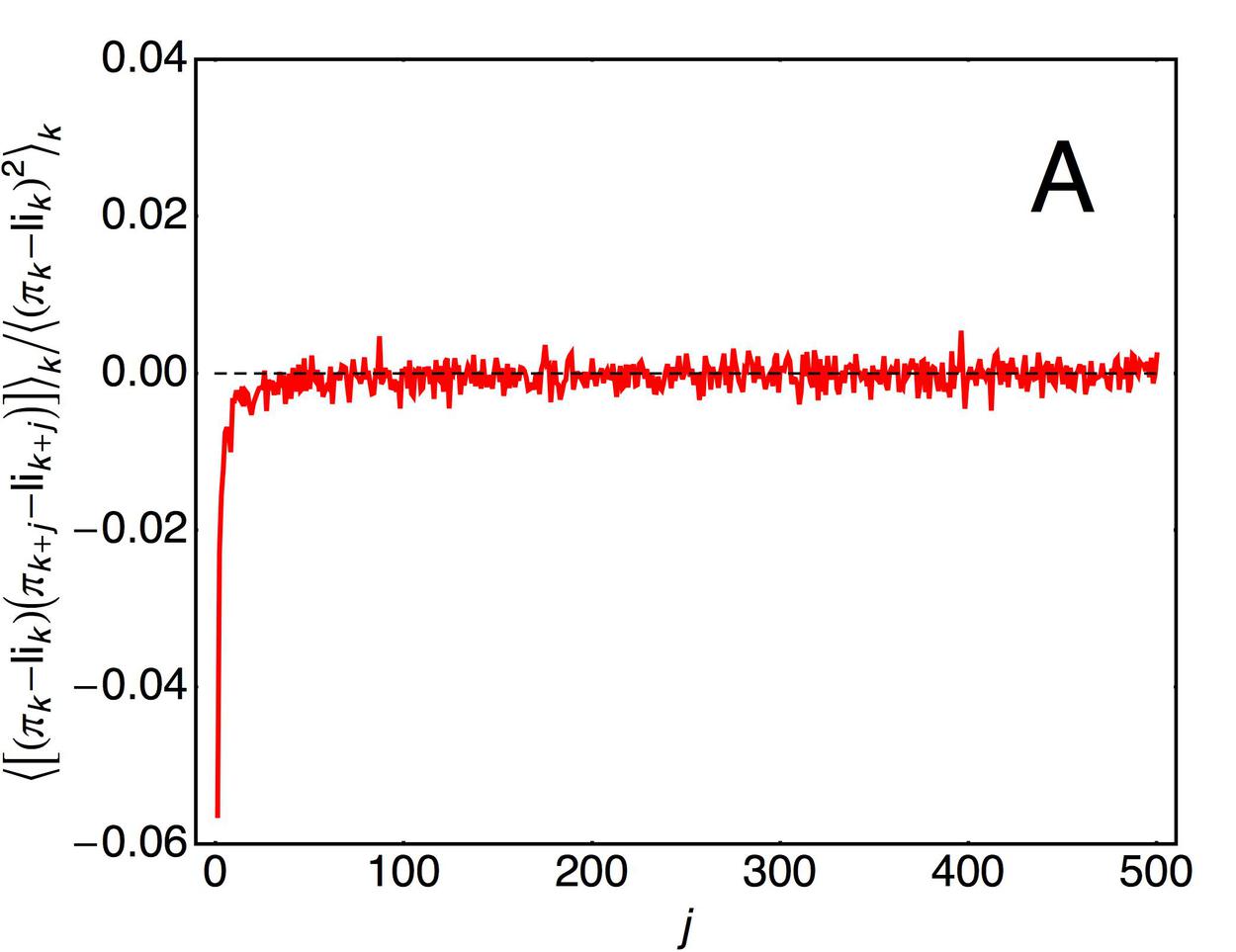}
        \end{subfigure}%
        ~
        \begin{subfigure}[b]{0.45\textwidth}
                \centering
                \includegraphics[width=\textwidth]{./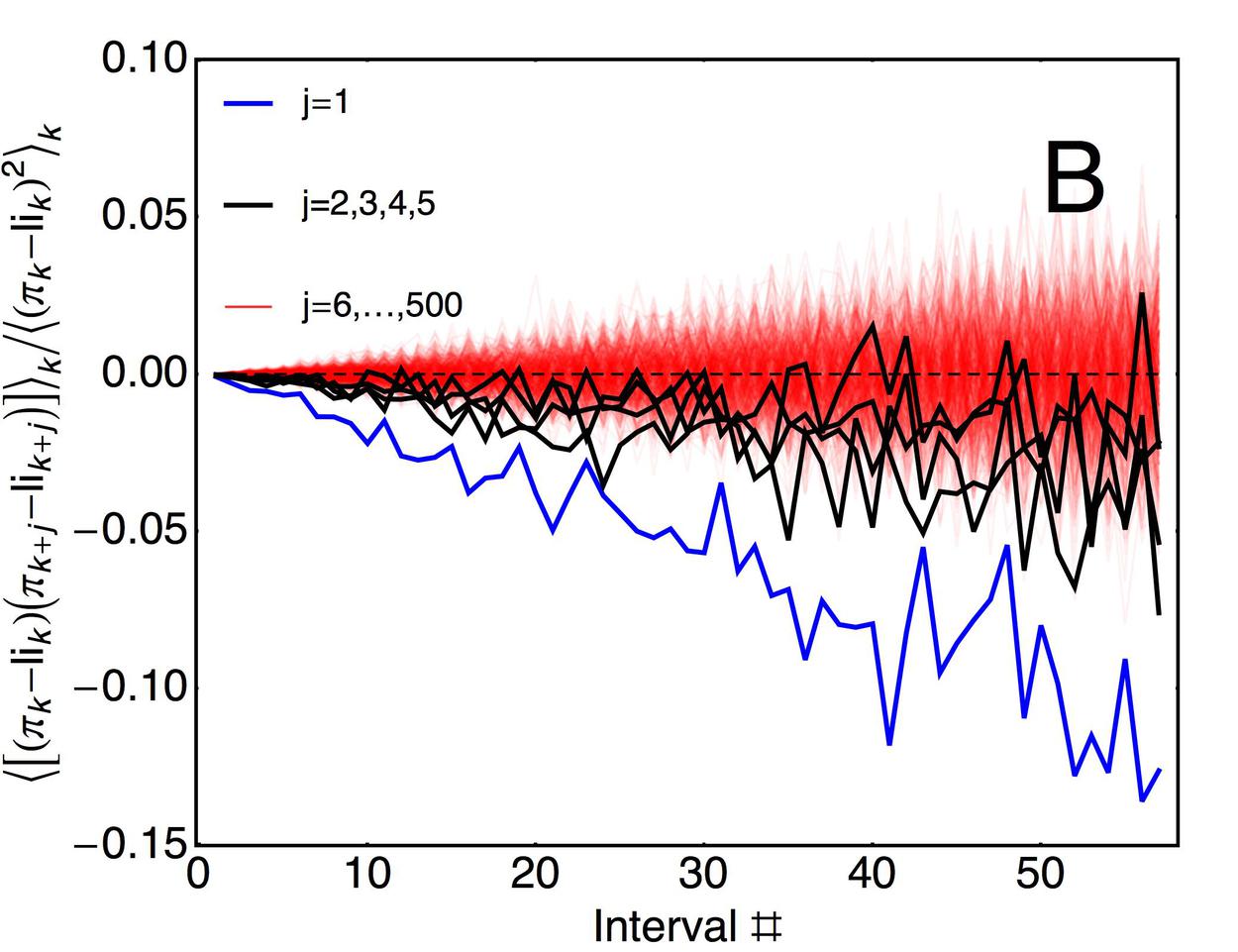}
        \end{subfigure}
\caption{
\small \sl
Measured average correlation between $\pi_k - \li_k$ and $\pi_{k+j} - \li_{k+j}$, plotted for $1\leq j \leq 500$. A) Average is taken over the whole range of $k$, $1\leq k \leq 575200$, and the correlation is plotted as a function of $j$. B) Average is taken over non-overlapping intervals of 10000 values of $k$. The correlation is plotted for each value of $j$ as a function of increasing interval number.  
}        
\label{fig:correlationsdiffpiklik}
\end{figure}

\begin{figure}[H]
        \centering
        \begin{subfigure}[b]{0.45\textwidth}
                \centering
                \includegraphics[width=\textwidth]{./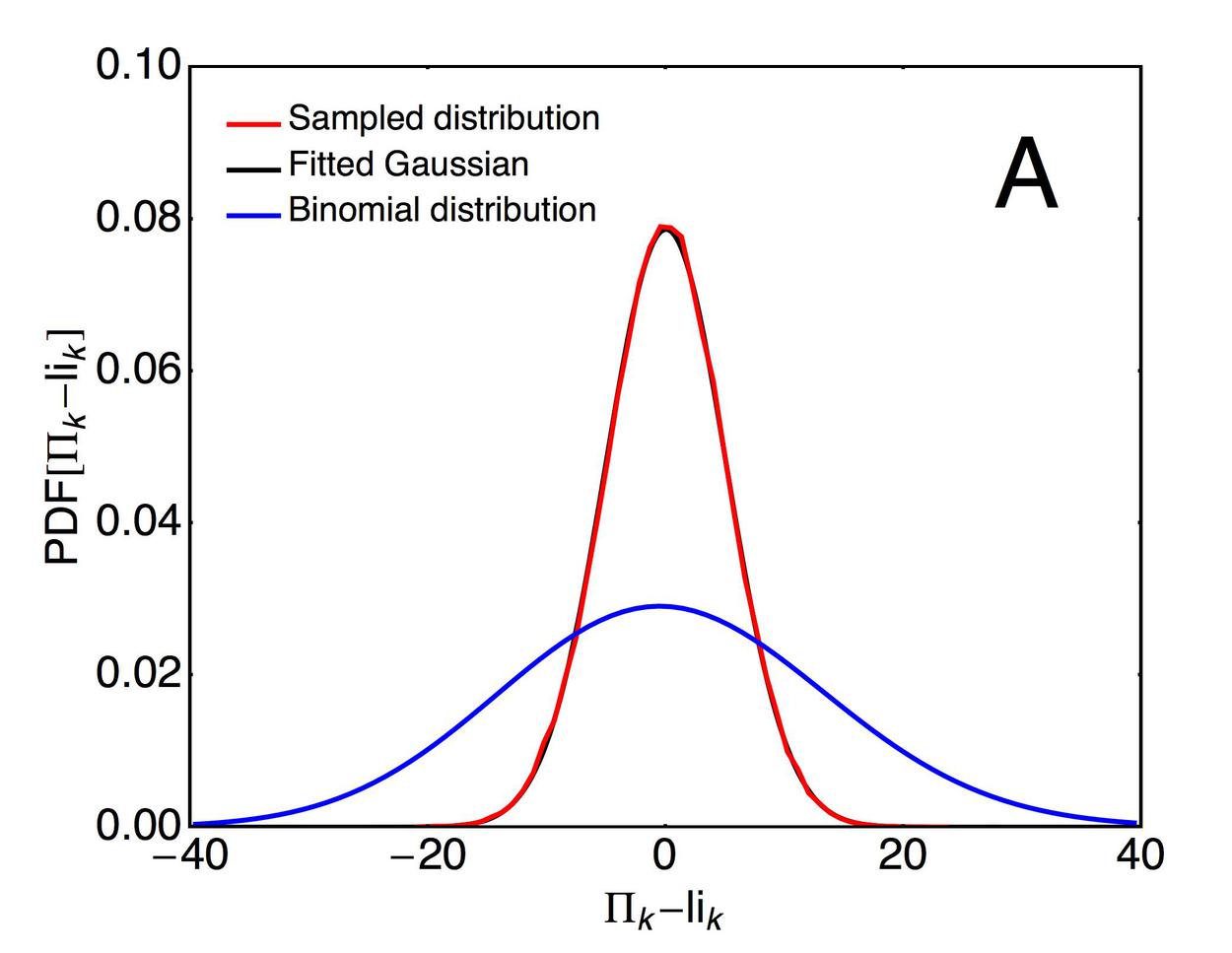}
        \end{subfigure}%
        ~
        \begin{subfigure}[b]{0.45\textwidth}
                \centering
                \includegraphics[width=\textwidth]{./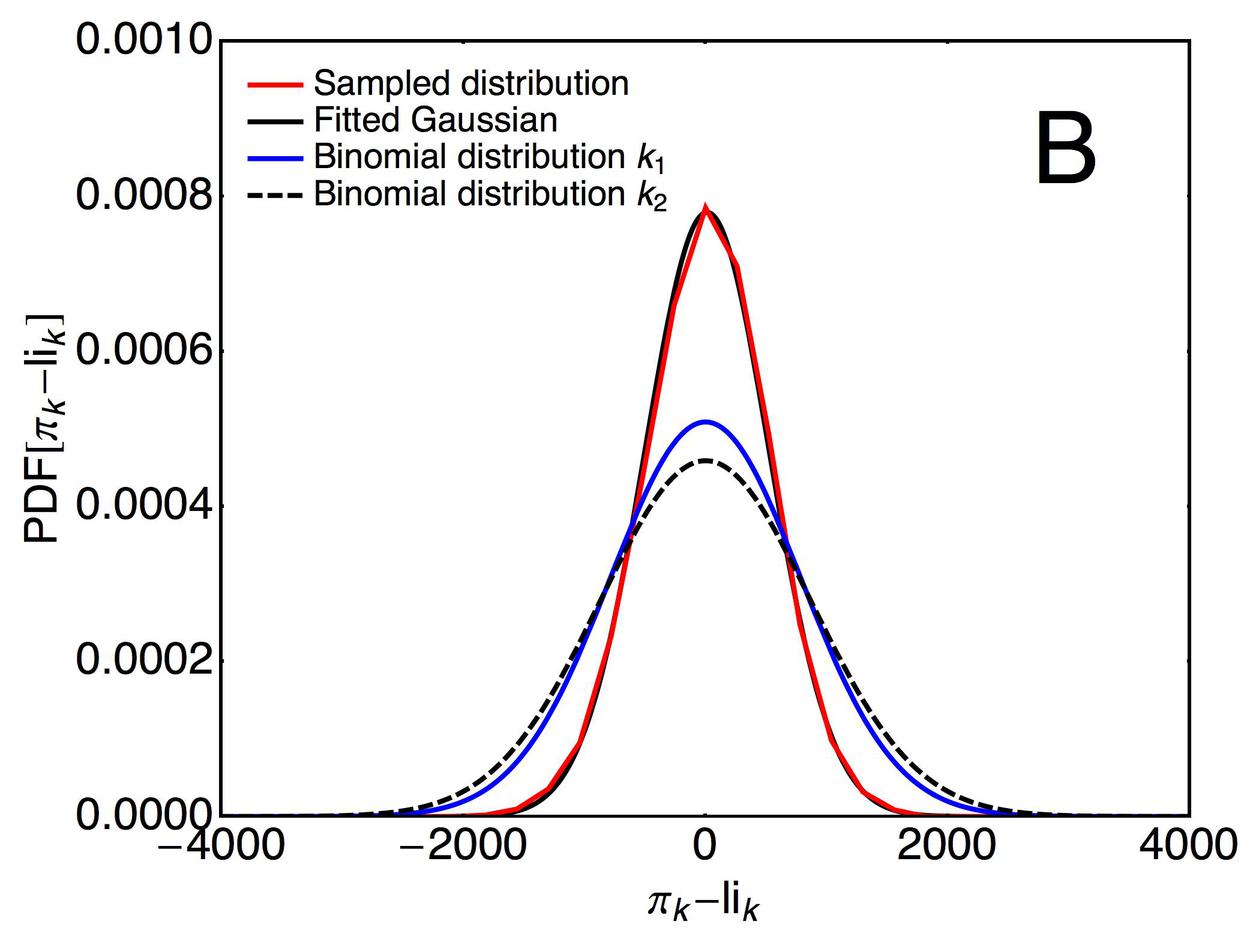}
        \end{subfigure}
\caption{
\small \sl
Empirical (red) and fitted (black) probability distributions of $\pi_k - \li_k$ for A) 100000 samples of the $\pi_k$ random model for the interval $s_{50}$. The blue curve shows the binomial distribution $\textrm{B}(l_{50},l_{50}/\log p_{51}^2)$; B) 4000 consecutive samples of $\pi_k - \li_k$ from the set of intervals $s_k$ with $g_k=6$, starting at sample number 20000.
The blue curve shows the binomial distribution for the interval $s_{k_1}$, corresponding to the first of the 4000 samples. The dashed black curve similarly shows the binomial distribution for the interval $s_{k_2}$, corresponding to the last of the 4000 values.
}        
\label{fig:distbin}
\end{figure}

\begin{figure}
        \centering
                \includegraphics[width=\textwidth]{./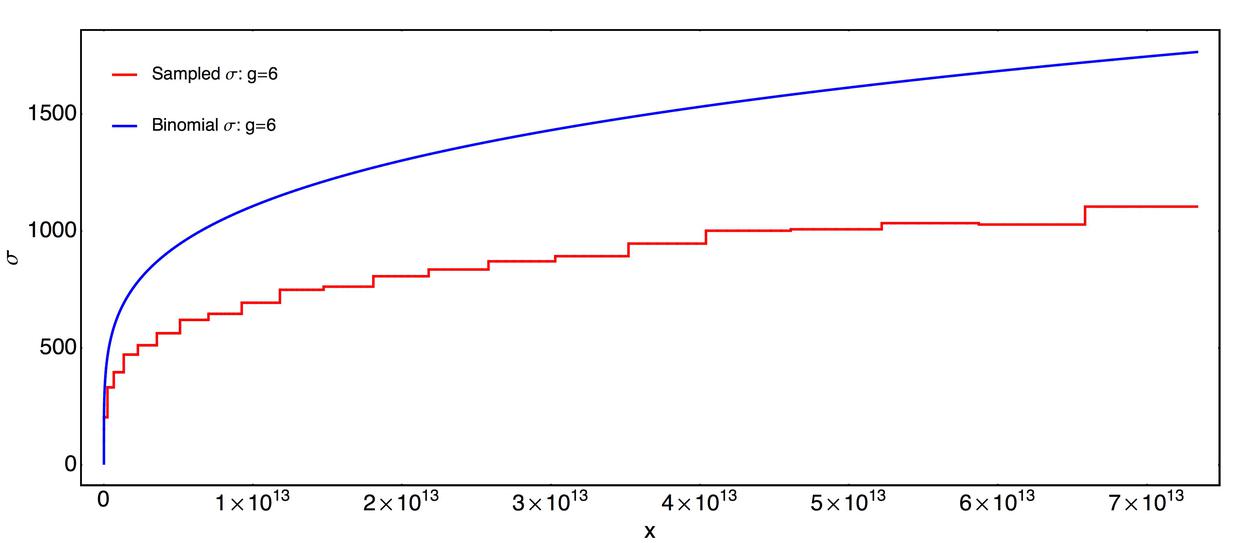}
\caption{
\small \sl
Measured standard deviations of $\pi_k - \li_k$ for the set of intervals $s_k$ where $g_k=6$ (red curve). The horizontal stretches spans 4000 values of $\pi_k - \li_k$ (except the first stretch; spans between 4000 and 8000 values), and the standard deviations are calculated over these. The standard deviation of the binomial distribution for each interval $s_k$ in the same set is shown as the blue curve.
}
\label{fig:sampledstdev}    
\end{figure}

\end{document}